\documentclass[12pt]{amsart}

\usepackage{psfrag}

\numberwithin{figure}{section}

\newtheorem{theorem}{Theorem}[section]
\newtheorem{lemma}[theorem]{Lemma}
\newtheorem{proposition}[theorem]{Proposition}
\newtheorem{corollary}[theorem]{Corollary}

\theoremstyle{definition}
\newtheorem{definition}[theorem]{Definition}
\newtheorem{remark}[theorem]{Remark}
\newtheorem{example}[theorem]{Example}

\numberwithin{equation}{section}
\newcommand{\beq}{\begin{equation}}

\newcommand{\eq}[2]{\begin{equation}\label{#1} #2\end{equation}}
\newcommand{\eqn}[1]{\begin{equation*}#1 \end{equation*}}

\newcommand{\eqs}[2]{\begin{equation}\label{#1}\begin{split} #2\end{split}\end{equation}}

\newcommand{\eqns}[1]{\begin{equation*}\begin{split} #1\end{split}\end{equation*}}

\newcommand{\beqa}{\begin{eqnarray}}
\newcommand{\eeqa}{\end{eqnarray}}

\newcommand{\beaa}{\begin{eqnarray*}}
\newcommand{\ben}{\begin{eqnarray*}}
\newcommand{\eaa}{\end{eqnarray*}}
\newcommand{\een}{\end{eqnarray*}}

\newcommand{\R}{\hbox{\rm I\kern-.13em  R}}
\newcommand{\Rn}{\R^n}
\newcommand{\dist}{{\rm dist}\,}
\newcommand{\xs}[1]{x_{#1}}
\newcommand{\ys}[1]{y_{#1}}

\newcommand{\iq}[1]{\quad\hbox{#1}\quad}
\renewcommand{\i}[1]{\text{ \ #1\ }}
\newcommand{\set}[1]{\{#1\}}
\newcommand{\Lp}{\left(}
\newcommand{\Rp}{\right)}

\newcommand{\ip}[1]{\left\langle #1\right\rangle}

\newcommand{\sign}{{\rm sign}}
\newcommand{\ra}{\rightarrow}

\newcommand{\reff}[1]{(\ref{#1})}

\newcommand{\no}[1]{\left\|#1\right\|}
\newcommand{\dt}{\frac {d\ }{dt}}

\newcommand{\nop}[1]{\|#1\|_p}
\newcommand{\da}{\downarrow}
\newcommand{\noe}[1]{|#1|}

\newcommand{\z}{l}
\newcommand{\ls}[1]{\z_{#1}}
\newcommand{\bls}[1]{\bl_{#1}}
\newcommand{\bl}{\bar \z}
\newcommand{\lst}{\z_*}
\newcommand{\blst}{\bl_*}
\newcommand{\be}{\bar e}
\newcommand{\by}{\bar y}

\newcommand{\li}{w}

\newcommand{\proofof}[1]{\noindent {\it Proof of #1}}

\title{The Problem of Two Sticks}
\author{Luis A. Caffarelli\\ Michael G. Crandall }
\address{Department of Mathematics, University of Texas at Austin,  Austin, TX, 78712}
\email{caffarel@math.utexas.edu}

\address{Department of Mathematics, University of California,  Santa Barbara,
Santa Barbara, CA 93106}
\email{crandall@math.ucsb.edu}
 

\makeindex

\setlength{\textwidth}{6.5in}
\setlength{\evensidemargin}{0in}
\setlength{\oddsidemargin}{0in}
\begin{document}

\thispagestyle{empty} 
\centerline{\scshape Cover Page}\vskip.2in
\large
\noindent Title:  The Problem of Two Sticks  \footnote{MSC 2000 Subject Classification, Primary 46B20, 52A21.} \newline

\noindent Authors:\newline
\indent Luis A. Caffarelli\\
 \indent Department of Mathematics, University of Texas at Austin,\\
\indent    Austin, TX, 78712\\  \indent caffarel@math.utexas.edu \\
\indent 512-471-3160
\vskip.1in

 Michael G. Crandall

 Department of Mathematics, University of California,\\ \indent  Santa Barbara,
Santa Barbara, CA 93106 \\
\indent   crandall@math.ucsb.edu\\
\indent 805-964-3256

\newpage
\setcounter{page}{1}

\normalsize
\begin{abstract} Let $\z=[\ls0,\ls1]$ be the directed line segment from $\ls0\in\Rn$ to $\ls1\in\Rn.$   Suppose $\bl=[\bls0,\bls1]$ is a second segment of equal length such that $\z, \bl$ satisfy the ``two sticks condition": $\no{\ls1-\bls0}\ge \no{\ls1-\ls 0}, \no{\bls1-\ls0}\ge \no{\bls1-\bls 0}.$ Here $\no{\cdot}$ is a norm on $\Rn.$  We explore the manner in which $\ls1-\bls1$ is then constrained when assumptions are made about ``intermediate points" $\lst\in \z, \blst \in \bl.$  Roughly speaking, our most subtle result constructs parallel planes separated by a distance comparable to $\no{\lst-\blst}$ such that $\ls 1-\bls1$ must lie between these planes,   provided that $\no{\cdot}$ is ``geometrically convex" and ``balanced", as defined herein.  The standard $p$-norms are shown to be geometrically convex and balanced. Other results estimate $\no{\ls1-\bls1}$ in a Lipschitz or H\"older manner by $\no{\lst-\blst}$.  All these results have implications in the theory of eikonal equations, from  which this ``problem of two sticks" arose.
\end{abstract}
\keywords{Minkowski geometry, finite dimensional Banach spaces.}
\maketitle

\centerline{{\scshape Introduction}}\vskip.1in
\setcounter{section}{1}
The origin of the ``problem of two sticks," which we are about to describe, lies in the theory of eikonal equations.   Roughly speaking, the results of  Caffarelli and Crandall \cite{cc} rely on knowledge of how the endpoints  of ``rays" of the distance function to some set, as measured in a norm $\no{\cdot},$ that emanate from points in the set and  pass through a common tiny ball in the interior of the region of differentiability of the distance function are constrained.  We provide a variety of results that speak to this issue. In particular, the crown jewel of our results, Corollary \ref{cstrip} below,  implies that the endpoints must lie between parallel planes which are separated by a distance comparable to the radius of the ball. 

 The ingredients of the problem of two sticks are a norm $\no{\cdot}$ on $\Rn$ and 
 two ``sticks"
\eqn
{
\z=[\ls 0, \ls 1], \ \bl=[\bls 0,\bls1], 
}
where $[\ls 0, \ls 1]$ denotes the directed line segment from $\ls0$ to $\ls1\in\Rn.$ Sometimes we regard $\z$ as a set, as when we write $x\in \z,$ or $x\in [\ls 0, \ls 1],$ but $[x,y]$ has an ``initial"  point $x$ and a ``terminal" point $y.$ We assume throughout this paper  that the sticks satisfy the ``two sticks condition"
\eq{ts}
{
\no{\ls 1-\bls 0}\ge \no{\ls1-\ls0}\iq{and}\no{\bls 1-\ls 0}\ge \no{\bls1-\bls0}.
}

To emphasize our remarks about the ordering of the endpoints of the sticks, observe that if $\ls0=\bls0,$ then \reff{ts} is satisfied for any choice whatsoever of $\ls 1, \bls 1.$
  However, if $\ls1=\bls1=0,$ then \reff{ts} is satisfied iff $\no{\ls0}=\no{\bls{0}}.$  In particular, in general, interchanging the initial and terminal points of sticks $l, \bl$ which satisfy \reff{ts} can lead to sticks which do not satisfy \reff{ts}.  For further remarks about the nature of the two sticks condition, see Section \ref{tsdf}, where we explain its relationship to nearest point mappings and distance functions.
  
Usually we will assume the sticks are of equal length $L:$
\eq{sl}
{
\no{\ls 1-\ls 0}=\no{\bls1-\bls0}=L.
}

 Assume that a point of $l$ is ``close" (to be quantified) to a point of $\bl,$ each point being somewhere away from the endpoints of the stick in which it lies.  The two sticks problem is then to obtain information about $\ls 1-\bls 1.$ In what manner is it constrained?

  For example, suppose that \reff{ts} holds, \reff{sl} holds with $L=1$ (a normalization),  the sticks intersect  at a  point $z\in l\cap\bl,$ and $z$ is not an endpoint of either stick.   Then one has 
\eqs{coin}
{
1\le\no{\ls1-\bls0}\le\no{\ls1-z}+\no{z-\bls0},
\ 1\le\no{\bls1-\ls0}\le\no{\bls1-z}+\no{z-\ls0},
}
which, when added, give
\eqns
{
2\le \no{\ls1-\bls0}+\no{\bls1-\ls0}\le \no{\ls1-z}&+\no{z-\ls0}+\no{\bls1-z}+\no{z-\bls0}
\\&
=\no{\ls 1-\ls 0}+\no{\bls 1-\bls 0}=2.
}
It follows that each inequality in \reff{coin} must be an equality.  If the norm is strictly convex (see Section \ref{mgn}), this entails the existence of positive constants $\alpha, \beta$ such that 
\eqns
{
\ls1-z=\alpha(z-\bls0),\ \bls1-z=\beta(z-\ls0).
}

To continue, since $z$ is an intermediate point of both sticks, each of which has length 1, the above implies  
$
\ls1-\ls0=\bls1-\bls0. 
$
However,  a moment's thought reveals that if the directions of unit length sticks are the same and they have a common intermediate point, they cannot satisfy the two sticks condition without being identical, that is $\ls1=\bls1,$ $\ls0=\bls0.$ We go a bit further with this, now allowing, for example, $z=\ls 1.$ A picture quickly reveals that the two sticks condition then fails unless $\ls 1=\bls 1.$ We can no longer assert that $\ls 0=\bls0,$ but surely $\ls 1=\bls 1$ still holds.  Similarly, if $z=\ls 0,$ then $\ls 0=\bls 0,$ but we can no longer assert that $\ls 1=\bls 1.$ In all, $\ls 1=\bls 1$ holds if the sticks have a common point, so long as that common point is not $\ls 0=\bls 0.$  It follows that given a collection $\set{l^i, i\in {\mathcal I}}$ of sticks of unit length, indexed here by some index set ${\mathcal I},$  which pairwise satisfy the two sticks condition, then 
the mapping from the set of all intermediate points from the family to terminal points of sticks in which they lie is well defined.   It is properties of this mapping which are called on in \cite{cc}. 

Using the simple result already noted,  straightforward compactness arguments show that if the norm is strictly convex,  $0<\varepsilon,$ and $0<t\le 1,$ then there is a $\delta_0=\delta_0(\varepsilon,t)>0$ such that 
\eqs{close}
{
\lst\in \z, \blst\in\bl,\ \ t\le\no{\lst-\ls0}, \no{\blst-\bls0},
}
and
\eq{cdelta}
{
\no{\lst-\blst}\le \delta_0
}
imply $\no{\ls1-\bls1}\le \varepsilon.$
That is, the mapping referred to in the preceding paragraph is continuous.

These remarks are not strong enough for our intended applications to eikonal equations, owing to the general behavior of $\delta_0$ as a function of $\varepsilon.$  Thus 
we prove a hierarchy of variants  under additional conditions. Indeed, in the case of the Euclidean norm on $\Rn,$ when the two sticks and equal length conditions are satisfied as well as \reff{close},  the mapping associated with \reff{close} here,  that is $\lst\mapsto \ls1, \blst\mapsto \bls1,$  is Lipschitz continuous; in fact, Corollary \ref{clip} below implies that then 
\eqn
{
\no{\ls1-\bls1}\le \frac {2}t\no{\lst-\blst}. 
}
This Lipschitz continuity also holds for norms  which are ``2 uniformly smooth and 2 uniformly convex" (see Section \ref{pucqus} for the definition). This is a special case of the main result of Section \ref{pucqus}, which states that if the norm is $p$-uniformly convex and $q$-uniformly smooth, then the mapping is H\"older continuous with exponent $q/p.$ 

The results of Section \ref{pucqus} apply to the $p$-norms on $\Rn,$ that is $\no{\cdot}=\nop{\cdot},$ where
\eq{lp}
{
\nop{x}:=\Lp\sum_{i=1}^n|\xs i|^p\Rp^{1/p},
}
in the range $1<p<\infty.$ Indeed, $\nop{\cdot}$ is 2-uniformly smooth and $p$-uniformly convex for $2\le p<\infty, $ and it is 2-uniformly convex and $p$-uniformly smooth for $1<p\le 2.$  In Section \ref{cexlp} we  provide examples to show that the H\"older continuity established for the $\nop{\cdot}$ cases is asymptotically an optimal modulus of continuity, up to constants.

However,  the H\"older continuity obtained in the $\nop{\cdot}$ case is not always sufficient for the purposes of \cite{cc}, even if the modulus is optimal. This deficiency led us to the notion of  norms which are ``geometrically convex," as introduced in Section \ref{spc}. For geometrically convex norms, which are also ``balanced", it is shown in Section \ref{tspc} that, roughly speaking, if $\delta_0$ is sufficiently small, then  \reff{close}, \reff{cdelta} imply that 
$\ls1-\bls1$ is confined between two parallel planes which are separated by a distance which is an estimable multiple of $\delta_0$. This is, of course, not a ``modulus of continuity" result; it is more subtle.  It is another task to verify that the $p$-norms are geometrically convex and balanced, and this we do in Section \ref{vgc}.  

We begin with the Euclidean case, after some remarks about the two sticks problem and distance functions. In this regard, it is clear that the problem of two sticks is  related to properties of nearest point mappings onto convex sets, and we  recognized that the results of Section \ref{pucqus} were likely to hold via papers concerning this issue.  These include, for example, 
B. Bj\"ornestal \cite{bjorn}, Y. Alber \cite{alber} and C. Li, X. Wang and W. Yang \cite{lwy}.  However, our Section \ref{pucqus} is short and  self-contained; correspondingly, our constants are not sharp.  

In contrast, the results and notions of Sections \ref{spc},  \ref{sc}, \ref{tspc} and \ref{vgc} are not suggested by other literature of which we are aware.  

As this entire paper could be made essentially self-contained, we have done so. Thus in the first part of Section \ref{mgn} we have presented some well-known elementary material with perhaps a different spirit than is usual; in particular, we do not use dual spaces or dual norms explicitly anywhere in this work.  

\tableofcontents
\section{\label{tsdf}Two Sticks and the Distance Function}

Suppose that $C\subset\Rn$ and $\ls 1, \bls 1\in\Rn.$ Suppose that $\ls 0, \bls 0\in C$ and 
\eq{nearest}
{
\no{\ls 1-\ls 0}\le \no{\ls 1-x}, \no{\bls 1-\bls 0}\le \no{\bls 1-x}\iq{for}x\in C.
}
Then $\ls 0$ is a point of $C$ which is as close to $\ls 1$ as any other point of $C,$ etc. Choosing $x=\bls0$ in the first inequality of \reff{nearest} and $x=\ls0$ in the second, we see that $l=[\ls0,\ls 1],$ $\bl=[\bls0,\bls 1]$  satisfy the two sticks condition. Conversely, if $l, \bl$ satisfy the two sticks condition and $C=\set{\ls0,\bls0},$ we have \reff{nearest}.  Moreover, we have, in both cases,
\eq{distf}
{
\no{\ls1 -\ls 0}=\dist(\ls1,C), \no{\bls1-\bls0}=\dist(\bls1,C),
}
where $\dist(x,C)$ is the distance, as measured by $\no{\cdot},$ from $x$ to $C.$  If we add the equal length condition, we are assuming these distances are equal.  Thus the study of the two sticks problem is a kind of atomization of the study of ``rays" of  distance functions, wherein lies its connection to Hamilon-Jacobi equations. 

Continuing in this line,  the notation
 \eq{lt}
{
\ls t:=(1-t)\ls0+t\ls1,\ \bls t:=(1-t)\bls0+t\bls1,
}
is used in the next remarks.  Note that we use $A:=B$ to indicate that $A$ is defined to be $B.$ 

 First, if $l, \bl$ satisfy the two sticks condition, then so do $[\ls 0, \ls t], [\bls 0, \bls 1]$ for $0\le t\le 1.$ To see this, merely note that 
\eqn
{
\no{\ls 1-\ls 0}=\no{\ls 1-\ls t}+\no{\ls t-\ls 0}\le\no{\ls1-\bls0} 
}
implies
\eq{ltt}
{
\no{\ls t-\ls 0}\le\no{\ls1-\bls0}-\no{\ls 1-\ls t}\le \no{\ls t-\bls 0}.
}
Iterating this remark, if $0\le s, t\le 1,$ then $[\ls 0, \ls t], [\bls 0, \bls s]$ also satisfy the two sticks condition.

When we add an equal length condition, say
\eqn
{
\no{\ls 1-\ls 0}=\no{\bls 1-\bls 0}=L,
}
there is an additional symmetry.  Observe then that 
\eqns
{
&\no{\ls 1-\ls 0}\le \no{\ls 1-\bls 0}\implies \no{\bls 1-\bls 0}\le \no{\ls 1-\bls 0},
\\
&\no{\bls 1-\bls 0}\le \no{\bls 1-\ls 0}\implies \no{\ls 1-\ls 0}\le \no{\bls 1-\ls 0};
}
that is the sticks $[\ls 1, \ls 0], [\bls 1, \bls 0]$ obtained by switching initial and terminal points also satisfy the two sticks condition. It follows from this  that the equal length condition and the previous discussion guarantee that each line below implies the next when $0\le t,s\le1:$
\eqs{flip}
{
{\rm (i)\ \ } &[\ls 0, \ls 1], [\bls 0, \bls 1] \iq{satisfy the two sticks and equal length conditions.}
\\
{\rm (ii)\ } &
[\ls 1, \ls 0], [\bls 1, \bls 0] \iq{satisfy the two sticks and equal length conditions.}
\\
{\rm (iii)\ } &
[\ls 1, \ls t], [\bls 1, \bls t] \iq{satisfy the two sticks and equal length conditions.}
\\
{\rm (iv)\ } &
[\ls t, \ls 1], [\bls t, \bls 1] \iq{satisfy the two sticks and equal length conditions.}
\\
{\rm (v)\ } &
[\ls t, \ls s], [\bls t, \bls s] \iq{satisfy the two sticks and equal length conditions. }
}
Indeed, note that if $0\le t\le s\le 1,$ and
$
l=[\ls0, \ls 1], \tilde l=[\ls1,\ls 0], \hat l=[\ls t,\ls 1] 
$, then
$ {\tilde l}_{1-t}= \ls{t}, \ {\hat l}_ {(s-t)/(1-t)}= \ls s. 
$

In particular, we note for later use that, via \reff{flip} (v),
\eq{flipa}
{
\no{\bls s-\ls t}\ge \no{\bls s-\bls t}, \ \no{\ls s-\bls t}\ge \no{\ls s -\ls t}\iq{for} 0\le t,s\le 1.
}

\section{Cases with Lipschitz or H\"older Continuity}

In this section we first treat the  Euclidean case.  Then we turn to the ``$p$-uniformly convex, $q$-uniformly smooth" case.  The Euclidean (or, more generally, Hilbert) case is also an example in which $p=q=2$. However, as is usual, it is clean and elegant in comparison to its generalization, and deserves to be singled out. 
\subsection{Two sticks in the Euclidean case}\label{tseuc}

We will denote the Euclidean norm by $\noe{\cdot};$ 
\eq{eucn}
{
\noe x:=\sqrt{\ip{x,x}},
}
where 
\eq{dip} 
{
\ip{x,y}:=\sum_{j=1}^n\xs j\ys j
}
is the Euclidean inner-product.

We begin with estimates valid for sticks $\z, \bl$ which satisfy the two sticks condition \reff{ts}, but which do not necessarily have the same length. The notation \reff{lt} is employed. The next result is well known. 

\begin{proposition}\label{mono} Let $\z, \bl$ satisfy the two sticks condition \reff{ts}. Then 
\eq{emono}
{
\ip{\ls1-\bls1,\ls0-\bls0}\ge 0.
}
In consequence, for $0\let\le1,$
\eq{lipa}
{
(1-t)^2|\ls0-\bls0|^2+ t^2|\ls1-\bls1|^2\le |\ls t-\bls t|^2.
}
\end{proposition}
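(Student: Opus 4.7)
The plan is to work directly with the Euclidean inner product, since the two sticks condition becomes a clean algebraic inequality once we square it. First I would rewrite the two sticks condition $|\ls1-\bls0|\ge|\ls1-\ls0|$ and $|\bls1-\ls0|\ge|\bls1-\bls0|$ using $|x|^2=\ip{x,x}$ and expand both sides. After cancelling the common terms $|\ls1|^2$ and $|\bls1|^2$, the first inequality reduces to
\eqn{
2\ip{\ls1,\ls0-\bls0}\ge|\ls0|^2-|\bls0|^2,
}
and the second, after analogous expansion and a sign flip, becomes
\eqn{
2\ip{\bls1,\ls0-\bls0}\le|\ls0|^2-|\bls0|^2.
}
Subtracting the second from the first makes the right-hand sides cancel exactly, yielding $\ip{\ls1-\bls1,\ls0-\bls0}\ge 0$, which is \reff{emono}.

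For the Lipschitz-type bound \reff{lipa}, I would use the affine representation
\eqn{
\ls t-\bls t=(1-t)(\ls0-\bls0)+t(\ls1-\bls1),
}
which follows immediately from the definition \reff{lt}. Taking the squared Euclidean norm gives the three-term expansion
\eqn{
|\ls t-\bls t|^2=(1-t)^2|\ls0-\bls0|^2+2t(1-t)\ip{\ls1-\bls1,\ls0-\bls0}+t^2|\ls1-\bls1|^2.
}
Since $0\le t\le 1$ makes the factor $2t(1-t)$ nonnegative, the cross term is nonnegative by \reff{emono}, and dropping it yields \reff{lipa}.

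There is no real obstacle here: the whole argument is a careful bookkeeping of squares and inner products, together with the elementary observation that the two expanded inequalities point in opposite directions and thus combine to cancel the quadratic terms in $\ls0,\bls0$. The only mild point to watch is the sign when reorganizing the second inequality (swapping $\ls0-\bls0$ for $\bls0-\ls0$), which is why subtracting, rather than adding, the two forms is what produces \reff{emono}.
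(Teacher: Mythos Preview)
Your proof is correct and follows essentially the same approach as the paper: square the two sticks inequalities, expand, and combine to isolate $\ip{\ls1-\bls1,\ls0-\bls0}$, then expand $|\ls t-\bls t|^2$ and drop the nonnegative cross term. The only cosmetic difference is that the paper groups the expansion as $\ls1-\bls0=(\ls1-\ls0)+(\ls0-\bls0)$ and \emph{adds} the two resulting inequalities, whereas you expand around the origin and \emph{subtract}; the algebra is the same either way.
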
 
\begin{proof} The relation \reff{emono} follows from adding the extremes in the relations
\eqns
{
|\ls1-\ls0|^2+|\ls0-\bls0|^2+2\ip{\ls1-\ls0,\ls0-\bls0}=&|(\ls1-\ls0)+(\ls0-\bls0)|^2=|\ls1-\bls0|^2\ge |\ls1-\ls0|^2,\\
|\bls1-\bls0|^2+|\ls0-\bls0|^2+2\ip{\bls1-\bls0,\bls0-\ls0}=&|(\bls1-\bls0)+(\bls0-\ls0)|^2=|\bls1-\ls0|^2\ge |\bls1-\bls0|^2,
}
and simplifying the result. 

To verify \reff{lipa}, we use \reff{emono} to deduce 
\eqns
{
|\ls t-\bls t|^2&=|(1-t)(\ls 0-\bls 0)+t(\ls1-\bls 1)|^2\\
&=(1-t)^2|\ls 0-\bls 0|^2+2t(1-t)\ip{\ls0-\bls0,\ls1-\bls1}+t^2|\ls1-\bls 1|^2\\
&\ge (1-t)^2|\ls0-\bls0|^2+ t^2|\ls1-\bls1|^2,
}
which is \reff{lipa}. 
\end{proof}

\begin{remark} \label{lip} The relation \reff{lipa} shows that the terminal point $\ls1$ is a Lipschitz continuous function of the intermediate point $\ls t,$ $0<t\le 1,$ with Lipschitz constant $1/t,$  in any family of sticks which pairwise satisfy the two sticks condition.  Note again that if  $\ls0=\bls0$  then the two sticks condition is always satisfied, so  $0<t$ is necessary to have Lipschitz continuity.  
\end{remark}

If we add the equal length assumption \reff{sl}, the Lipschitz continuity may be extended to intermediate points $\ls s, \bls t,$ where $s\not=t.$

\begin{corollary}\label{clip} Let $l, \bl$ satisfy \reff{ts} and \reff{sl}. Then 
\eq{lipb}
{
|\ls1-\bls1|\le \frac 2t|l_s-\bls t|\iq{for} 0<t\le s\le 1.
}
\end{corollary}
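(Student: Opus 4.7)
The plan is to start from the basic one-parameter Lipschitz estimate already recorded in Remark \ref{lip}, and then absorb the mismatch between the parameters $s$ and $t$ via the triangle inequality and the ``switching'' inequalities in \reff{flipa}.

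First, since $l,\bl$ satisfy \reff{ts}, Remark \ref{lip} (obtained from \reff{lipa} with the choice $t$) gives
\eqn{
|\ls 1-\bls 1|\le \frac{1}{t}\,|\ls t-\bls t|,
}
which handles the case $s=t$ and converts the problem into one of bounding $|\ls t-\bls t|$ by $|\ls s-\bls t|$ at a controlled cost. The Euclidean structure is used only through Proposition \ref{mono}; everything else in what follows is purely metric.

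Next, I would insert $\ls s$ between $\ls t$ and $\bls t$, writing
\eqn{
|\ls t-\bls t|\le |\ls t-\ls s|+|\ls s-\bls t|.
}
Because $\ls s$ and $\ls t$ lie on the same segment of length $L$ at linear parameters $s$ and $t$ with $t\le s$, the equal length condition \reff{sl} yields $|\ls s-\ls t|=(s-t)L$, and this is in particular bounded by $L$. What I really need, however, is that $|\ls t-\ls s|$ itself is controlled by the cross term $|\ls s-\bls t|$.

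The crucial observation is exactly the second inequality of \reff{flipa}, which asserts $|\ls s-\bls t|\ge |\ls s-\ls t|$ whenever $0\le t,s\le 1$ under the two sticks and equal length hypotheses. This is the step where the full symmetry machinery of \reff{flip} is being used; I expect it to be the main (if modest) obstacle, in the sense that without \reff{flipa} one would get only the weaker estimate involving $L$. Applying it yields $|\ls t-\ls s|\le |\ls s-\bls t|$, so $|\ls t-\bls t|\le 2\,|\ls s-\bls t|$, and combining with the Lipschitz estimate from Remark \ref{lip} produces
\eqn{
|\ls 1-\bls 1|\le \frac{1}{t}|\ls t-\bls t|\le \frac{2}{t}|\ls s-\bls t|,
}
which is \reff{lipb}. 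Note that the restriction $t\le s$ enters precisely in step two, since it ensures that $\ls s$ is between $\ls t$ and $\ls 1$, but the bound in \reff{flipa} itself is symmetric in $s,t$.
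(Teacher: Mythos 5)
Your proof is correct and is in fact a cleaner organization of the paper's own argument. Both proofs reduce via \reff{lipa} to bounding $\noe{\ls t-\bls t}$ by $\noe{\ls s-\bls t}$, and both invoke \reff{flipa}; but the paper splits into two cases according to whether $\noe{\ls t-\ls s}\le\tfrac12\noe{\ls t-\bls t}$, using the reverse triangle inequality in one case and \reff{flipa} only in the other, whereas you apply \reff{flipa} unconditionally in the single chain $\noe{\ls t-\bls t}\le\noe{\ls t-\ls s}+\noe{\ls s-\bls t}\le 2\noe{\ls s-\bls t}$, eliminating the case distinction. One minor expositional slip: the restriction $t\le s$ does not actually enter your argument anywhere (nor does it enter the paper's proof) --- \reff{flipa} holds for all $0\le s,t\le1$, the triangle inequality needs no ordering, and the explicit formula $\noe{\ls s-\ls t}=(s-t)L$ you mention is never used; only $t>0$ is essential.
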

\begin{proof}
Using \reff{lipa} 
\eq{ats}
{
|\ls1-\bls1|\le  \frac1t|\ls t-\bls t|.
}
First we assume that
\eq{ca}
{
|\ls t-\ls s |\le \frac 12 |\ls t-\bls t|.
}
Then, using \reff{ats}, 
\eq{enough}
{
|\ls s-\bls t |\ge |\bls t-\ls t|-|\ls t -\ls s|\ge  |\bls t-\ls t|-\frac 12 |\ls t-\bls t|=\frac 12|\bls t-\ls t|\ge \frac t 2|\ls 1-\bls 1|,
 }
 so \reff{lipb} holds.  If \reff{ca} does not hold, then we use \reff{flipa}, \reff{lipa}, to again conclude that 
\eq{cb}
{
\noe{\ls s-\bls t}\ge|\ls t-\ls s|\ge\frac 12 |\ls t-\bls t|\ge \frac t 2\noe{\ls 1-\bls 1}.
}
\end{proof}

 \begin{remark}\label{needel} If the equal length assumption is not satisfied, there is no Lipschitz estimate quite like \reff{lipb}.  To see this, let $n=1$ and take $\z= [0,1], \bl=[0,2], s=1, t=1/2.$ 
 \end{remark}
 
\subsection{\label{pucqus}Two Sticks in the $p$-Uniformly Convex, $q$-Uniformly Smooth Case}
 
In this section, $\no{\cdot}$ is a norm for which 
there are constants $0<A, B, q, p,$ with $1< q\le p,$ such that 
\eq{bsce}
{
 A\no{e-\be}^p\le  2-\no{e+\be} \iq{for} \no e=\no {\be}=1,}
that is, $\no{\cdot}$ is ``$p$-uniformly convex,"
and  
\eq{bsme}
{
\no{x+y}+\no{x-y}-2\no x\le \frac{B\ }{\no x^{q-1}}\no y^q\iq{for}x\not=0,
}
that is, $\no{\cdot}$ is ``$q$-uniformly smooth."  

\begin{remark}\label{ysm} Note that \reff{bsme} holds in general if it holds for $\no x=1.$ Moreover, if $\no x=1$ and \reff{bsme} holds for small $\no y,$ then it holds (with a different $B$) for all $y,$ as the left hand side is at most $2\no y$ and $q\ge 1.$ 
\end{remark}

\begin{remark} If $2\le p<\infty,$ the $p$-norm $\nop{\cdot}$ is $2$-uniformly smooth and $p$-uniformly convex, while if $1<p\le 2,$ it is 2-uniformly convex, and $p$-uniformly smooth.   The first assertion was proved by Clarkson \cite{jac} and the second by Hanner \cite{hanner}.   Regarding the more general and precise notions of ``modulus of convexity" and ``modulus of smoothness" and relations between them, see Lindenstrauss \cite{lin}. 
\end{remark}
\begin{proposition}\label{holderp}  Let \reff{bsce}, \reff{bsme} hold and $R>0.$ Then there is a constant $C=C(A,B,R,p,q)$ such that  if  $\z, \bl,$ satisfy the two sticks condition, have unit length,
and satisfy
\eq{param}
{
 \no{\ls1-\bls1}\le R,}
  then
  \eq{holder}
{
\no{\ls 1-\bls 1}\le \frac 1 t C\no{\ls t-\bls t}^{q/p}\iq{for} 0<t\le 1. 
}
In consequence, if  $0<t\le s\le 1,$ 
\eq{holdera}
{
\no{\ls 1-\bls 1}\le \frac 1 t 2^{q/p}C\no{\ls t-\bls s}^{q/p}.
}
\end{proposition}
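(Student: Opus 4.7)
The plan is to mirror the Euclidean proof (Proposition~\ref{mono} together with Corollary~\ref{clip}), but to replace its inner-product identities with inequalities extracted from $p$-uniform convexity \reff{bsce} and $q$-uniform smoothness \reff{bsme}. Set $u=\ls 1-\ls 0$, $v=\bls 1-\bls 0$, $w=\ls 0-\bls 0$, so that $\no u=\no v=1$; the two sticks condition reads $\no{u+w}\ge 1$ and $\no{v-w}\ge 1$; and $\ls t-\bls t=w+t(u-v)$. The equal length symmetry \reff{flip}(v) applied to the sub-segments $[\ls 0,\ls s],[\bls 0,\bls s]$ of length $s$ strengthens this to the one-parameter family $\no{u+w/s}\ge 1$, $\no{v-w/s}\ge 1$ for every $s\in(0,1]$.

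\emph{Step 1 (analogue of the monotonicity).} Apply \reff{bsme} with $x=u$, $y=w$, and symmetrically with $x=v$, $y=w$. Since $\no u=1$ and $\no{u+w}\ge 1$, this yields $\no{u-w}\le 1+B\no w^q$; symmetrically $\no{v+w}\le 1+B\no w^q$. In the Euclidean case $p=q=2$ these encode $\langle u,w\rangle\ge 0$ and $\langle v,w\rangle\le 0$ up to quadratic corrections, which add to give the monotonicity \reff{emono}.

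\emph{Step 2 (closing via $p$-convexity).} From \reff{bsme} one obtains the Ball--Clarkson-type inequality $\tfrac12(\no a^q+\no b^q)\le\no{(a+b)/2}^q+K_q\no{(a-b)/2}^q$. Applying this to $a=u+w/s$, $b=v-w/s$ (whose norms are $\ge 1$ by the strengthened two sticks family) and recognising that $(a-b)/2=s^{-1}(\ls{s/2}-\bls{s/2})$ yields the lower bound $\no{(u+v)/2}^q\ge 1-K_q s^{-q}\no{\ls{s/2}-\bls{s/2}}^q$. Combine this with the Ball--Clarkson-type upper bound $\no{(u+v)/2}^p+K_p\no{(u-v)/2}^p\le 1$ derivable from \reff{bsce} on the unit vectors $u,v$; using $\no{(u+v)/2}\le 1$ and the elementary inequality $1-x^{p/q}\le(p/q)(1-x)$ for $x\in[0,1]$ to pass between $q$-th and $p$-th powers, one obtains $\no{u-v}^p\le C_1 s^{-q}\no{\ls{s/2}-\bls{s/2}}^q$. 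Setting $s=2t$ and combining $\no{\ls 1-\bls 1}\le\no w+\no{u-v}$ with the identity $w=\ls t-\bls t-t(u-v)$ (giving $\no w\le\no{\ls t-\bls t}+t\no{u-v}$), then absorbing error terms using the a priori bound $\no{\ls 1-\bls 1}\le R$, yields \reff{holder}. The H\"older exponent $q/p$ reflects the convexity/smoothness power gap, and when $p=q=2$ the estimate reduces to Corollary~\ref{clip}.

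\emph{Step 3 (mixed version \reff{holdera}).} This follows from \reff{holder} by the triangle-inequality dichotomy of Corollary~\ref{clip}: either $\no{\ls t-\ls s}\le\tfrac12\no{\ls t-\bls t}$, in which case the triangle inequality gives $\no{\ls s-\bls t}\ge\tfrac12\no{\ls t-\bls t}$; or else \reff{flipa} gives $\no{\ls s-\bls t}\ge\no{\ls t-\ls s}>\tfrac12\no{\ls t-\bls t}$. In either case $\no{\ls t-\bls t}\le 2\no{\ls s-\bls t}$, and \reff{holder} then yields \reff{holdera} with the factor $2^{q/p}$. The principal obstacle is the algebra in Step 2: coordinating the Ball--Clarkson inequalities with the one-parameter family of rescaled two sticks conditions while handling the $q$-vs-$p$ power mismatch; deriving the Ball--Clarkson inequalities themselves from \reff{bsce} and \reff{bsme} is a standard but technical preliminary that fixes the constants $K_p, K_q$ in terms of $A, B, p, q$.
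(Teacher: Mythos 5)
Your proposal takes a genuinely different route from the paper's. The paper first proves the $t=1/2$ case by applying \reff{bsme} directly with $x=\tfrac12(e+\be)$ and $y=\ls{1/2}-\bls{1/2}$, combines with \reff{bsce}, and then extends to all $t$ by a rescaling argument on sub-sticks with separate cases $t\le 1/2$ and $t\ge 1/2$ (estimates \reff{oh}, \reff{oho}, \reff{tleh}). You instead record the one-parameter family of two-sticks inequalities $\no{u+w/s}\ge 1$, $\no{v-w/s}\ge 1$ for the sub-sticks and feed it into Ball--Clarkson power inequalities so that the substitution $s=2t$ yields the estimate directly. This is an attractive unification; the price is the preliminary derivation of the power-type inequalities from \reff{bsce}, \reff{bsme}. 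The $p$-convexity one is a two-line consequence (use $2^p-\no{u+v}^p\ge 2^{p-1}(2-\no{u+v})\ge 2^{p-1}A\no{u-v}^p$), but the $q$-smoothness one genuinely needs a Taylor expansion together with a small-versus-large $\no y$ case split, and implicitly uses $q\le 2$; this is about the same amount of work as the paper's rescaling.

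There is, however, a gap: setting $s=2t$ forces $s\le 1$, so Step 2 as written covers only $0<t\le 1/2$, and you never address $t>1/2$. The natural repair is to run the same argument on the reversed sub-sticks $[\ls 1,\ls{2t-1}]$, $[\bls 1,\bls{2t-1}]$ via \reff{flip}, but that yields $\no{u-v}^p\le C\bigl(2(1-t)\bigr)^{-q}\no{\ls t-\bls t}^q$, whose factor $(1-t)^{-q/p}$ blows up as $t\to 1$ while \reff{holder} has only the bounded $1/t$. To close this you must reassemble via the identity $\ls 1-\bls 1=(1-t)(u-v)+(\ls t-\bls t)$, so that the explicit prefactor $(1-t)$ combines with the blow-up to leave $(1-t)^{1-q/p}\le 1$. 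Your reassembly $\no{\ls 1-\bls 1}\le\no w+\no{u-v}$ with $\no w\le\no{\ls t-\bls t}+t\no{u-v}$ costs $(1+t)\no{u-v}$ rather than $(1-t)\no{u-v}$ and does not produce this cancellation, so the case $t>1/2$ fails as written. Step 3 is correct and matches the paper's treatment exactly.
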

\begin{remark}\label{condr} The unit length condition and \reff{param} imply that 
\eqn
{
\no{\ls0-\bls0}=\no{(\bls1-\bls0) -(\ls1-\ls0)+\ls1-\bls1}\le 2+R,
}
and then 
\eqn
{
\no{\ls t-\bls t}\le (1-t)\no{\ls0-\bls0}+t\no{\ls1-\bls1}\le 2+R. 
}
For this reason we relabel $2+R$ as simply ``$R$" and simply assume hereafter that
\eq{esti}
{
\no{\ls t-\bls t}\le R \iq{for} 0\le t\le 1. 
}
Further note that then 
\eq{rt}
{
\no{\ls t-\bls t}\le R^{1-q/p}\no{\ls t-\bls t}^{q/p},
}
as $q\le p.$ 
\end{remark}
\begin{proof}

 First we establish \reff{holder} for $t=1/2.$  Put
\eq{ncd}
{
e:=\ls1-\ls0, \ \be:=\bls1-\bls 0.
}
Noting that
\eqn
{
\ls1=\ls{1/2}+\frac 12 e, \  \ls0=\ls{1/2}-\frac 1 2e,\ \bls1=\bls{1/2}+\frac 12 \be, \  \bls0=\bls{1/2}-\frac 1 2\be,}
 the two sticks and equal length conditions are
\eqs{dat}
{
&1=\no{e}\le\no{\ls1-\bls0}=\no{\frac 1 2(e+\be)+\ls{1/2}-\bls{1/2}},\\
& 1=\no{\be}\le\no{\bls1-\ls0}=\no{\frac 1 2(e+\be)+\bls{1/2}-\ls{1/2}}.}
The desired estimate \reff{holder} for $t=1/2$ is of the form
\eq{des}
{
\no{\ls1-\bls1}=\no{\ls{1/2}-\bls{1/2}+\frac 1 2(\be-e)}\le C\no{\ls{1/2}-\bls{1/2}}^{q/p},
}  
where the meaning of $C$ varies according to need.

Thus there are only three vectors to be concerned about, $e, \be,$ and
\eq{q}
{
m:=\ls{1/2}-\bls{1/2};
}
the notation is a mnemonic for ``middle". 
In these terms, we want
\eq{ren}{
1=\no{e}\le \no{m+\frac 1 2(e+\be)},\ 
1=\no{\be}\le \no{-m+\frac 1 2(e+\be)},
}
to imply 
\eq{ref}
{
\no{m+\frac12(e-\be)}\le C\no{m}^{q/p}.
}
If we show, instead, that \reff{ren} implies
\eq{refa}
{
\no{e-\be}\le C\no{m}^{q/p}, 
}
with some other constant $C, $ then 
\eq{newc}
{
\no{m+\frac12(e-\be)}\le C\no{m}^{q/p}+\no m,
}
and Remark \ref{condr} takes us back to the form \reff{ref}.   For the moment, we will  obtain the bound \reff{refa} and leave the resulting \reff{newc} in ``raw" form.

From the two sticks condition \reff{ren} and the uniform smoothness assumption \reff{bsme}, we have 
\eqn
{
2\le \no{m+\frac 1 2(e+\be)} + \no{-m+\frac 1 2(e+\be)}\le \no{e+\be} +\frac{2^{q-1}B}{\no{e+\be}^{q-1}}\no{m}^q,
}
or
\eqns
{
2-\no{e+\be}\le \no{m+\frac 1 2(e+\be)} + \no{-m+\frac 1 2(e+\be)}-\no{e+\be}\le \frac{2^{q-1}B}{\no{e+\be}^{q-1}}\no{m}^q.
}
This estimate deteriorates when $\no{e+\be}$ is small. To handle this, we note again, as in Remark \ref{ysm}, that the intermediate term above is never more that $2\no m.$ Thus we consider cases as follows:
\eqs{fin}
{
2-\no{e+\be}\le \left\{\begin{split}&2^{q-1}B\no m^q\iq{if} \no{e+\be}\ge 1,\\ &2\no m\iq{if} \no{e+\be}\le 1. \end{split}\right.
}

 Combining \reff{bsce} and \reff{fin}, we find:
 \eq{combo}
 {
A\no{e-\be}^p \le \left\{\begin{split}&2^{q-1}B\no m^q\iq{if} \no{e+\be}\ge 1,\\ &2\no m\iq{if} \no{e+\be}\le 1. \end{split}\right.
 }
 Next note that if $\no{e+\be}\le 1,$ then
 \eqn
 {
 \no{e-\be}=\no{e+\be-2\be}\ge 2\no{\be}-\no{e+\be}\ge 2-1=1.
 }
 Therefore, in this case, we use $q\ge 1$ to find
 \eqn
 {
 1\le \no{e-\be}^p\le \frac{2}A\no m\implies \no{e-\be}^p\le \frac{2^q}{A^q}\no m^q.
 }
 Therefore, choosing $C(A,B,p,q)$ appropriately, \reff{combo} implies the  estimate
 \eq{ans}
 {
 \no{e-\be}\le C(A,B,p,q)\no m^{q/p}. 
 }
 Recalling what we were about,  we have established \reff{newc} with $C$ as above, or 
 \eq{oh}
 {
 \no{\ls 1-\bls 1}\le  C(A,B,p,q)\no {\ls{1/2}-\bls{1/2}}^{q/p}+\no {\ls{1/2}-\bls{1/2}}.
 }
 
 Next let us observe that if sticks $l^*, \tilde l$ have equal lengths $L\le 1,$ which is not necessarily 1, and satisfy the two sticks condition, we may apply \reff{oh} to $l^*/L, \tilde l/L$ (with the obvious meaning) to find
 \eqs{oho}
 {
 \no{l^* _1-\tilde l_1}&\le  C(A,B,p,q)L^{1-q/p}\no {l^*_{1/2}-\tilde l_{1/2}}^{q/p}+\no {l^*_{1/2}-\tilde l_{1/2}}
 \\ &\le  C(A,B,p,q)\no {l^*_{1/2}-\tilde l_{1/2}}^{q/p}+\no {l^*_{1/2}-\tilde l_{1/2}},
 }
 where we used $q\le p.$ 

To proceed, we next treat  the case $0<t\le1/2.$  With this assumption, we  note that
\eqn
{
\ls 1=\frac{1-t}{t} \ls {2t}-\frac {1-2t}t\ls t,\ \bls 1=\frac{1-t}{t} \bls {2t}-\frac {1-2t}t\bls t, 
}
and, from this, 
\eqn{
\ls 1-\bls 1= \frac{1-t}{t}( \ls {2t}-\bls {2t})-\frac {1-2t}t(\ls t-\bls t),
}
and then
\eq{scale}
{
\no{\ls 1-\bls 1}\le \frac 1 t\Lp(1-t)\no{\ls{2t}-\bls{2t}}+(1-2t)\no{\ls t-\bls t}\Rp.
}
Now we apply the estimate \reff{oho} to the pair of sticks $\tilde l=[\ls0,\ls{2t}], l^*=[\bls0,\bls{2t}],$ which have length $2t\le1$ and midpoints $\ls t, \bls t,$   to conclude that
\eq{new}
{
\no{\ls{2t}-\bls{2t}}\le C(A,B,p,q)\no{\ls t-\bls t}^{q/p}+\no{\ls t-\bls t}.
}
 Plugging this into \reff{scale} while using $t\le 1/2,$ we find
\eq{tleh}
{
\no{\ls 1-\bls 1}\le \frac 1 t\Lp C(A,B,p,q)\no{\ls t-\bls t}^{q/p}+2\no{\ls t-\bls t}\Rp.
 }

Finally, we assume that $1/2\le t< 1.$  This time we apply the estimate \reff{oh} to the pair of sticks $\tilde l=[\ls{2t-1},\ls1], l^*=[\bls{2t-1},\bls1]$ which have length $2t-1$ and midpoints $\ls t, \bls t,$ to conclude that
\eqn
{
\no{\ls 1-\bls 1}\le C(A,B,p,q)\no {\ls{t}-\bls{t}}^{q/p}+\no {\ls{t}-\bls{t}}.
}

The estimate \reff{tleh} and the estimate just above combine with Remark \ref{condr} to  establish \reff{holder} with a suitable $C,$  a process which causes $C$ to depend on $R$ as well as $A,B,p,q.$

As in the Euclidean case, \reff{holdera} holds if 
\eq{casea}
{
\no{\bls t-\bls s}\le\frac 12\no{\ls t-\bls t}
}
for then 
\eqn
{
\no{\ls t-\bls s}\ge \no{\ls t-\bls t}-\no{\bls t-\bls s}\ge \frac 12\no{\ls t-\bls t}. 
}
On the other hand, if \reff{casea} does not hold, then, by \reff{flipa}, 
\eqn
{
\no{\ls t-\bls s}\ge \no{\bls t-\bls s}\ge \frac 1 2\no{\ls t-\bls t}.
}
In both cases, we deduce \reff{holdera} from \reff{holder}.\end{proof}

\section{More General Norms:  Preliminaries}\label{mgn}

Since the remainder of this paper can be made entirely self contained and eminently accessible (as was the previous material) with little trouble, we will do so. Thus we review some standard facts and nomenclature.

We assume throughout that $\no{\cdot}$ is strictly convex. This amounts to the assumption that  if $x, y\not=0$ and 
\eqn
{
\no{x+y}=\no x+\no y,
}
then $x, y$ are ``positively parallel," i.e., $x=\alpha y$ for some $\alpha>0.$ Let us give this notion a formal definition, so as to make clear how we use the term  ``positively parallel".

\begin{definition}\label{dpara} Let $x, y\in\Rn.$ 
Then $y$ is positively parallel to $x$ if $y=\alpha x$ holds with $\alpha>0.$ 
\end{definition}

Note that ``positively parallel" is a symmetric relation. 

We also assume throughout that  $x\mapsto \no x$ is continuously differentiable on $\Rn\setminus \set{0}.$   The gradient of $\no{x}$ is denoted by $N(x).$ $N(x)$ is an exterior normal at $x$  to the ball of radius $\no x$ centered at the origin, with a certain normalization explained below.  We use $Dg$ to denote the gradient of $g:\Rn\ra \R,$ so 
\eq{dno}
{
N(x)=D\no x. 
}
Using the homogeneity of the norm, if $t>0,$ on the one hand
\eqn
{
\dt\no{t x}=\dt (t\no x)=\no x
}
and on the other
\eqn
{
\dt\no{t x}=\ip{x,N(tx)}
}
for $t>0.$
Hence
\eq{propj}
{
\no x=\ip{x,N(x)}.}
Here we use the notation \reff{dip}.  Somewhat redundantly, 
\eqn
{
D\no{tx}=tN(tx) \iq{and} D\no{tx}=Dt\no x=tD\no x=tN(x)
}
shows that $N(tx)=N(x)$ for $t>0.$  In the same way,  $\no{x}=\no{-x}$ implies that $N(-x)=-N(x).$  Finally, 
\eqn
{
\ip{y,N(x)}=\dt \no{x+ty}\Big|_{t=0} =\lim_{t\da 0}\frac{\no{x+ty}-\no x}t\le \lim_{t\da 0}\frac{\no{x}+t\no{y}-\no x}t=\no y. 
}

We have established the following properties of $N,$ which are used later without further comment: for $x\not=0,$ $t>0,$ 
\eqns
{
\ip{x,N(x)}=\no x, \ N(tx) = N(x),\ N(-x)=-N(x),\ \ip{y,N(x)}\le \no y. 
}

The strict convexity of $\no{\cdot}$ is reflected in $N$ in the following way. If $x, y\not=0$ and $N(x)=N(y),$ then $x$ and $y$ are positively parallel.  Indeed, the assumption implies that 
\eqn
{
\no{x}+\no{y}=\ip{x,N(x)}+\ip{y,N(y)}=\ip{x+y,N(x)}\le \no{x+y}.
}
By strict convexity, $\no x+\no y\le \no{x+y}$ implies  that $x$ and $y$ are positively parallel.

The converse also holds. For this, we note that $N(x)$ is  the unique vector $z$ such that
\eq{ud}
{
\ip{x,z}=\no x, \ip{y,z}\le \no y \iq{for} y\in\Rn.  
}
To see this, observe that \reff{ud} implies that
\eqns
{
\no{x+ty}=\no x+\no{x+ty}-\no x&= \no x+\no{x+ty}-\ip{x,z}
\\&\ge 
\no x+\ip{x+ty,z}-\ip{x,z}=\no x+t\ip{y,z}. 
}
Hence
\eqn
{
\dt \no{x+ty}\Big|_{t=0}=\ip{y,N(x)}\ge \ip{y,z}
}
for every $y.$ This entails $N(x)=z.$  Suppose that $x, y\not=0$ and 
\eqn
{
\no{x+y}=\ip{x+y,N(x+y)}=\ip{x,N(x+y)}+\ip{y,N(x+y)}=\no x+\no y. 
}
 By the preceding remark, $N(x+y)=N(x)=N(y).$ Hence $x$ and $y$ are positively parallel. 
 
\subsection{Geometric Convexity}\label{spc}
For $x\in \Rn\setminus\set{0}$ the function
\eq{es}
{
h(x,y):=\no{y}-\ip{y,N(x)}=\no{y}-\no x-\ip{y-x,N(x)} 
}
is the difference between $\no y$ and the linearization of $\no\cdot$ at $x$ evaluated at $y.$

\begin{remark} \label{props} The following  properties of $h$ will be used later, often without comment.  It is assumed that $x\not=0.$ The properties are:
\eqs{hprop}
{
{\rm (i)\ }& h(\alpha x,y)=h(x,y)\i{for} \alpha>0,
\\
{\rm (ii)\ }& h(- x,y)=h(x,-y),
\\
{\rm (iii)\ }& h(\alpha x,\alpha y)=\alpha h(x,y)\i{for} \alpha>0,
\\ 
{\rm (iv)\ }& h(-x,- y)=h(x,y),
\\
{\rm (v)\ }& y\mapsto h(x,y) \i{is convex,}
\\
{\rm (vi)\ }& h(x,\alpha x) =0 \i{for} 0\le \alpha,
\\
{\rm (vii)\ }& h(x,\alpha x) =2|\alpha|\no x \i{for} \alpha\le 0,
\\
{\rm (viii)\ }& h(x,y) =0 \i{iff}  y=\alpha x\i{for some} \alpha\ge 0.
}
\end{remark}
These relations follow directly  from properties of $N$ previously discussed and the definition of $h.$  Perhaps (viii) deserves comment.  Now $h(x,y)=0$ amounts to
\eqn
{
\no y=\ip{y,N(x)},
}
and we know that this implies $N(x)=N(y)$ if $y\not=0$ (see \reff{ud}).  Since $\no{\cdot}$ is strictly convex, this implies that $x$ and $y$ are positively parallel if $y\not=0.$  

\begin{definition}\label{dpc} We say that $\no{\cdot}$ is {\it geometrically convex} with constants $r, \Lambda$, where  
\eq{rl}
{
 0<r\iq{and} 2< \Lambda,
}
 provided that 
\eq{dpce}{
 \Lambda h(x,x+y)\le h(x,x+2y)\iq{for}x\not=0\iq{and} \no y\le r\no x.
}
If we merely say that ``$\no{\cdot}$ is geometrically convex", this means that it is geometrically convex with some constants $r, \Lambda$ satisfying \reff{rl}.\end{definition}

By homogeneity, \reff{dpce} holds iff it holds when $\no x=1.$ 

\subsection{Some Consequences of Geometric Convexity}\label{sc}
 We first notice that the range of $y$ for which an estimate of the form \reff{dpce} holds can be taken as large as desired. The lemma states that it can be doubled, and then, of course, it can be doubled again, etc. In this regard, notice that $\Lambda>2$ implies $3-2/\Lambda>2.$
\begin{lemma} \label{epc} Let \reff{dpce} hold. Then 
\eq{dpcee}
{
 \Lp 3-\frac 2\Lambda\Rp h(x,x+y)\le h(x,x+2y)\iq{for}x\not=0\iq{and} \no y\le 2r\no x.
}
That is, if $\no{\cdot}$ is geometrically convex with constants $r, \Lambda,$ then it is also geometrically convex with constants $2r$, $3-2/\Lambda.$ 
\end{lemma}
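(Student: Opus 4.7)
The plan is to combine the convexity of $y\mapsto h(x,y)$ (property (v) in Remark \ref{props}) with a single application of the hypothesis \reff{dpce} applied at the halved argument. The point is that for $\|y\|\le 2r\|x\|$ we have $\|y/2\|\le r\|x\|$, so the hypothesis is usable with $y$ replaced by $y/2$, giving
\eqn{
\Lambda\, h(x,x+y/2)\le h(x,x+y),\quad\text{i.e.}\quad h(x,x+y/2)\le \tfrac{1}{\Lambda}h(x,x+y).
}

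The key algebraic identity is the convex combination
\eqn{
x+y=\tfrac{2}{3}(x+y/2)+\tfrac{1}{3}(x+2y),
}
which one verifies by direct computation. Applying convexity of $h(x,\cdot)$ together with $h(x,x)=0$ (property (vi)) yields
\eqn{
h(x,x+y)\le \tfrac{2}{3}h(x,x+y/2)+\tfrac{1}{3}h(x,x+2y).
}
Substituting the bound $h(x,x+y/2)\le h(x,x+y)/\Lambda$ from the first step gives
\eqn{
h(x,x+y)\le \tfrac{2}{3\Lambda}h(x,x+y)+\tfrac{1}{3}h(x,x+2y),
}
and rearranging produces $\frac{3\Lambda-2}{3\Lambda}h(x,x+y)\le \frac{1}{3}h(x,x+2y)$, which is exactly \reff{dpcee}.

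There is no real obstacle here; the only nonroutine piece is discovering that $c=1/3$ is the right weight on $h(x,x+2y)$, which one can derive by optimizing a one-parameter family of convex combinations of $x$, $x+y/2$, $x+2y$ whose barycenter equals $x+y$ — the coefficient on $x$ drops out precisely at $c=1/3$, and this choice yields the sharp constant $3-2/\Lambda$. Note also that the assumption $\Lambda>2$ is exactly what guarantees $3-2/\Lambda>2$, so the iterated constant still satisfies the standing requirement in \reff{rl}; iterating the lemma therefore remains meaningful.
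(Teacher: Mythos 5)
Your proof is correct and is essentially the paper's own argument after rescaling $y\mapsto y/2$: the paper's convexity step $g(4y)-g(2y)\ge 2(g(2y)-g(y))$ for $g(y)=h(x,x+y)$ is, once rearranged, exactly the three-point inequality $g(2y)\le \tfrac23 g(y)+\tfrac13 g(4y)$, which is your convex combination applied at twice the scale. One small nit: property (vi) ($h(x,x)=0$) plays no role here, since your barycentric decomposition involves only $x+y/2$, $x+y$, and $x+2y$, so the parenthetical invocation of it can be dropped.
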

\begin{proof}
Let 
\eqn
{
g(y):=h(x,x+y). 
}
We use only the convexity of $g$ and 
\eq{cpc}
{
g(2y)\ge \Lambda g(y)
}
to conclude that 
\eq{cpd}
{
g(4y)\ge \Lp 3-\frac 2\Lambda \Rp g(2y). 
}
Via convexity and \reff{cpc},
\eqn
{
g(4y)-g(2y)\ge 2(g(2y)-g(y))\ge 2\Lp g(2y)-\frac 1\Lambda g(2y)\Rp.
}
Therefore 
\eqns
{
g(4y)= g(4y)-g(2y)&+g(2y)\ge 
\\
&2\Lp g(2y)-\frac 1\Lambda g(2y)\Rp+g(2y)=\Lp 3-\frac 2\Lambda \Rp g(2y).
}
\end{proof}

\begin{definition}
The {\it modulus of geometric convexity} of $\no{\cdot}$ at $x$ is
\eq{dmc}
 {
 \sigma(x,t):=\max_{\no y\le t}h(x,x+y).
 }
\end{definition}

\begin{remark}\label{rema} Let $\no{\cdot}$ be geometrically convex with constants $r, \Lambda.$  Let $y$ satisfy 
\eqn
{
\no y\le t\le r\no x,\ \sigma(x,t)=h(x,x+y).
}
Then
 \eqn {\sigma(x,2t)\ge h(x,x+2y)\ge \Lambda h(x,x+y)= \Lambda \sigma(x,t),
 }
 so
$
\sigma(x,2t)\ge \Lambda \sigma(x,t).
$
 \end{remark}
 
 \begin{lemma}\label{remb}  Let $x\not=0,$ $t>0,$ and $y$ satisfy $\no y\le t$ and 
 \eqn
 {
 \sigma(x,t)=h(x,x+y). 
 }
 Then $\no y=t.$ Moreover, if  $\ip{w-(x+y),N(y)}\ge 0, $ then
 \eq{hs}
 {
 h(x,w)\ge h(x,x+y)=\sigma(x,t). 
 }
 That is, $w=x+y$ minimizes $h(x,w)$ for $w$ in the half space exterior to the ball $B_t(x)$ at $x+y$ with interior normal $N(y).$
  
  In consequence, 
  \eq{axy}
  {
  -t\le \ip{x,N(y)}\le 0. 
  }
 \end{lemma}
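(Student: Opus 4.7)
The plan is to handle the three assertions in succession, all flowing from the Lagrange/KKT characterization of the maximizer $y$ of the convex function $y\mapsto h(x,x+y)$ over the closed ball $\no y\le t$. A preliminary observation is that $\sigma(x,t)>0$ in the nondegenerate setting: were it zero, then $h(x,x+y')=0$ for every $\no{y'}\le t$, which by property (viii) in Remark \ref{props} would force every point of $B_t(x)$ to be positively parallel to $x$, impossible unless the ball is contained in the ray through $x$. In particular $x+y\ne 0$, since $h(x,0)=0<\sigma(x,t)$.

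To establish $\no y=t$, I would consider the one-variable convex function $\phi(s):=h(x,x+sy)$. Since $\phi(0)=h(x,x)=0$ and $\phi(1)=\sigma(x,t)>0$, the slope $\phi(s)/s$ is nondecreasing on $(0,\infty)$, so if $\no y<t$ then $s_\star:=t/\no y>1$ would give $\no{s_\star y}=t$ and $\phi(s_\star)\ge s_\star\phi(1)>\phi(1)=\sigma(x,t)$, contradicting the definition of $\sigma$.

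Both the half-space property and the bounds on $\ip{x,N(y)}$ come from deriving the KKT identity
\eqn{
N(x+y)-N(x)=\lambda N(y),\qquad \lambda\ge 0.
}
The gradient of the objective at $y$ equals $N(x+y)-N(x)$ (using $x+y\ne 0$), and since $y$ is a boundary maximum with outward constraint-normal $N(y)$, the one-sided directional derivative inequality yields $\ip{N(x+y)-N(x),v}\ge 0$ whenever $\ip{v,N(y)}\ge 0$; this forces $N(x+y)-N(x)$ to be a nonnegative multiple of $N(y)$. Moreover $\lambda>0$, since $\lambda=0$ would give $N(x+y)=N(x)$ and the contradiction $h(x,x+y)=0$. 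The half-space estimate \reff{hs} now follows from the convexity bound $h(x,w)\ge h(x,x+y)+\ip{N(x+y)-N(x),w-(x+y)}=\sigma(x,t)+\lambda\ip{N(y),w-(x+y)}$. Pairing the KKT identity with $x$ produces $\ip{x,N(x+y)}=\no x+\lambda\ip{x,N(y)}$, and combining with $\ip{x,N(x+y)}\le \no x$ gives $\ip{x,N(y)}\le 0$. Pairing instead with $x+y$ produces $h(x,x+y)=\lambda(\no y+\ip{x,N(y)})$; since the left side is positive and $\lambda>0$, we conclude $\ip{x,N(y)}\ge -\no y=-t$.

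The main obstacle is the clean derivation of the multiplier $\lambda$: one must verify that the one-sided derivative of the objective in every inward ball direction at $y$ is $\le 0$, deduce that the linear functionals $\ip{N(x+y)-N(x),\cdot}$ and $\ip{N(y),\cdot}$ share the same closed half-space of nonnegativity, and then conclude they are positive scalar multiples of one another. This is standard Lagrange-multiplier machinery for a $C^1$ objective and a strictly convex $C^1$ constraint, but it is the hinge on which the entire lemma turns; once it is in hand, all three conclusions reduce to pairing the identity with the two natural test vectors $x$ and $x+y$.
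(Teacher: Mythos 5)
Your proof is correct, and it rests on the same Lagrange-multiplier identity $N(x+y)-N(x)=\alpha N(y)$ with $\alpha>0$ that the paper uses; both proofs derive \reff{hs} from it via the convexity subgradient inequality, so the core is identical. You do differ from the paper in the two peripheral steps. For $\|y\|=t$, you exploit the slope monotonicity of the convex function $s\mapsto h(x,x+sy)$ to scale a putative interior maximizer out to the boundary and strictly increase $h$, whereas the paper simply notes that an interior maximum of the $C^1$ convex objective would force the gradient $N(x+y)-N(x)$ to vanish, hence $h(x,x+y)=0$; both arguments hinge on $\sigma(x,t)>0$, which you make explicit and the paper leaves tacit (it requires dimension $\ge 2$, as you observe). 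For \reff{axy}, you pair the multiplier identity directly against the test vectors $x$ and $x+y$, using $\langle x,N(x+y)\rangle\le\|x\|$ for the upper bound and $h(x,x+y)=\lambda(\|y\|+\langle x,N(y)\rangle)>0$ with $\lambda>0$ for the lower bound (which actually gives the strict inequality $\langle x,N(y)\rangle>-t$), while the paper instead reuses the already-proved half-space statement \reff{hs} with $w=(1+s)x$, taking $s=-1$ and $s\to\infty$. Your route is a bit more self-contained and yields a marginally sharper conclusion; the paper's is more economical in that it recycles \reff{hs}. One point worth writing out: the pairing with $x+y$ uses $\langle y,N(y)\rangle=\|y\|$ (which holds because $N$ is the gradient of the norm), and the passage to $\langle x,N(y)\rangle\ge -t$ requires $\lambda>0$, which you did establish by ruling out $N(x+y)=N(x)$.
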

 
 \begin{proof}
 Note that the gradient of 
 \eqn
 {
y\mapsto h(x,x+y)=\no{x+y}-\ip{x+y,N(x)}
 }
 is $N(x+y)-N(x).$  This vanishes only if $x+y$ is positively parallel to $x,$ and then $h(x,x+y)=0.$  Hence it must be that $\no y=t.$ 
Using Lagrange multipliers and the assumptions on $y,$  there is an $\alpha> 0$ such that 
 \eq{nc}
 {
 N(x+y)-N(x)=\alpha N(y). 
 }
 Hence $\ip{w-(x+y),N(y)}\ge 0$ implies that
 \eqn
 {
 \ip{w-(x+y),N(x+y)-N(x)}\ge 0.
 } 
 Therefore
 \eqs{zp}
 {
 h(x,w)=\no{w}& -\ip{w,N(x)}\\& \ge \ip{w,N(x+y)}-\ip{w,N(x)}\\ 
 &=\no{x+y}+\ip{w-(x+y),N(x+y)-N(x)}-\ip{x+y,N(x)}\\&\ge\no{x+y}-\ip{x+y,N(x)}=h(x,x+y).
 }
 
 To establish \reff{axy}, we note that by
 \eqn
 {
 0=h(x,x+sx)<h(x,x+y) \iq{for} -1\le s
 }
 and the claim already proved, we have
 \eqn
 {
 \ip{(x+sx)-(x+y),N(y)}=s\ip{x,N(y)}-\ip{y,N(y)}=s\ip{x,N(y)} -t \le 0. 
 }
 Taking $s=-1$ establishes the left most inequality of \reff{axy}, while letting $s\ra\infty$ proves the right most inequality of \reff{axy}. 
  \end{proof}
 \begin{remark} If $g$ is a convex function, then one has 
 \eqn
 {
 g(w)\ge g(z)+\ip{w-z,Dg(z)}.
 }
 In particular, $z$ minimizes $g(w)$ over the half space 
$
 \ip{w-z,Dg(z)}\ge 0. 
 $
 We are really using this remark above, somewhat hidden. 
 \end{remark}
 \begin{example}\label{mch}
We provide an example, using  the Euclidean norm, which is written $\noe{\cdot}$ as before. This example, which is the only one we have computed, is not used later in the text. However, it does offer some insight. In this regard, it would be interesting to know what the set of maximizing $y'$s can look like in other cases, for example, the cases $\no{\cdot}=\nop{\cdot}.$ 

  Let us compute $\sigma(x,t)$ for $\noe x=1, 0<t\le 1.$  One has, in this case, 
\eq{hilj}
{
N(x)=\frac{x}{\noe x}.
}

Let $y$  maximizes $h(x,x+y)$ subject to $\noe y=t.$
By \reff{nc}, we know such a maximizing point satisfies
\eq{nch}
{
\frac{x+y}{\noe{x+y}}-x=\alpha y
}
for some $\alpha> 0.$ If $\alpha\noe{x+y}\not=1,$ this may be solved for $y:$
\eqn
{
y=\frac {1-\noe{x+y}}{\alpha\noe{x+y}-1}x=\frac  {1-\noe{x+y}}{\alpha(\noe{x+y}-1)+\alpha-1}x.
}
If the coefficient of $x$ on the right is positive, we know that $h(x,x+y)=0$ (Remark \ref{props}). Hence it must be negative. Since $\noe y=t,$ we then have  $y=-tx,$ or $x+y=(1-t)x,$ which still implies $h(x,x+y)=0,$ as $t\le1.$ 

Therefore $\noe{x+y}\alpha=1,$  and then \reff{nch} merely states that 
\eqn
{
\frac 1{\noe{x+y}} x-x=0
}
or $\noe{x+y}=1.$  On the other hand, this implies that
\eq{cip}
{
\noe x^2+\noe y^2+2\ip{y,x}=1+t^2+2\ip{y,x}=1, \iq{or} \ip{y,x}= -\frac{t^2}2.
}
Therefore
\eqn
{
h(x,x+y)=\noe{x+y}-1-\ip{y,x}=1-1+\frac{ t^2}2=\frac{t^2}2
}
and the maximizing $y's$ are just the points of the form $y=z-x$ where $\noe z=1, \noe{z-x}=t.$ 
\end{example}

The next lemma is crucial later. The property it asserts we call ``duality."

 \begin{lemma}\label{theta} Let $\no{\cdot}$ be geometrically convex  with constants $r, \Lambda.$ Then, for $\no {y}\le r\no x, $
 \eqn
{
 h(x,x+2y)\le \frac\Lambda{\Lambda-2}h(x+2y,x).
}
Equivalently, if $\no{z-x}\le 2r\no x,$ then
 \eqn
{
 h(x,z)\le \frac\Lambda{\Lambda-2}h(z,x).
}

\end{lemma}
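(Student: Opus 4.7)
The plan is to reduce everything to a one-variable convexity problem. Set
\[
g(t) := h(x,x+ty),
\]
which is convex in $t$ because $y\mapsto h(x,x+y)$ is convex (property (v) in Remark \ref{props}) and we restrict to a line; and it satisfies $g(0)=0$. Geometric convexity is then the statement $g(2)\ge \Lambda g(1)$, and the goal \,$h(x,x+2y)\le \frac{\Lambda}{\Lambda-2}h(x+2y,x)$\, will be reformulated purely in terms of $g$ and $g'$.

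The first step is a direct computation of $g'$ and the ``duality sum''. Since $D\no{\cdot}=N(\cdot)$, one has $g'(t)=\ip{y,N(x+ty)-N(x)}$. Writing out the definitions of $h(x,x+2y)$ and $h(x+2y,x)$ and using $N(-u)=-N(u)$, the cross terms telescope to give the key identity
\[
h(x,x+2y)+h(x+2y,x)=2\ip{y,N(x+2y)-N(x)}=2g'(2).
\]
So $h(x,x+2y)=g(2)$ and $h(x+2y,x)=2g'(2)-g(2)$, and the claimed inequality is equivalent to
\[
g(2)\le \frac{\Lambda}{\Lambda-1}\,g'(2).
\]

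The second step proves this inequality. Convexity of $g$ gives that $g'$ is nondecreasing, so
\[
g(2)-g(1)=\int_{1}^{2}g'(s)\,ds\le g'(2).
\]
Combined with the geometric-convexity bound $g(1)\le g(2)/\Lambda$, this yields
\[
g(2)\le g(1)+g'(2)\le \tfrac{1}{\Lambda}g(2)+g'(2),
\]
and rearranging produces $g(2)(1-1/\Lambda)\le g'(2)$, i.e.\ $g(2)\le \frac{\Lambda}{\Lambda-1}g'(2)$, which is exactly what was needed. Substituting back gives the first displayed inequality of the lemma; the ``equivalently'' form is just the change of variables $z=x+2y$, noting $\no{z-x}\le 2r\no x$ iff $\no y\le r\no x$.

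I do not anticipate a real obstacle: the only subtle point is recognizing that the mysterious-looking combination $h(x,z)+h(z,x)$ collapses to $\ip{z-x,N(z)-N(x)}=2g'(2)$, which is the Bregman-type symmetry hinted at in the definition of $h$. Once this identity is in hand, the lemma follows from one line of convexity ($g(2)-g(1)\le g'(2)$) and one line of geometric convexity ($g(1)\le g(2)/\Lambda$).
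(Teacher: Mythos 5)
Your proof is correct. Both the key identity and the reduction check out: indeed $h(x,z)+h(z,x)=\ip{z-x,N(z)-N(x)}$ follows immediately from $\no u=\ip{u,N(u)}$, and with $z=x+2y$ this is $2g'(2)$; the gradient inequality for the convex $g$ gives $g(1)\ge g(2)-g'(2)$, which combined with $\Lambda g(1)\le g(2)$ (geometric convexity, valid since $\no y\le r\no x$) yields $g(2)\le\frac{\Lambda}{\Lambda-1}g'(2)$, and this rearranges to the stated bound. The one small thing to flag is the degenerate case $x+2y=0$: there $N(x+2y)$ and $g'(2)$ are undefined, but both $h(x,x+2y)$ and $h(x+2y,x)$ (with the convention $h(0,\cdot)=0$, see Remark~\ref{losc}) vanish, so the conclusion holds trivially.

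The paper's proof reaches the same conclusion by a direct chain of algebraic rewrites starting from $\Lambda(\no{x+y}-\ip{x+y,N(x)})\le h(x,x+2y)$ and closing with the observation $\no{x+y}-\ip{x+y,N(x+2y)}\ge 0$. Your route uses exactly the same two inputs in disguise --- the geometric convexity inequality and the nonnegativity $h(x+2y,x+y)\ge 0$ (which is precisely your $g(2)-g(1)\le g'(2)$) --- but packages them through the Bregman-type symmetry identity and the scalar function $g$. This is a genuinely cleaner organization: it isolates the one-variable inequality $g(2)\le\frac{\Lambda}{\Lambda-1}g'(2)$ as the real content, makes the role of convexity transparent rather than buried in cancellations, and explains where the constant $\Lambda/(\Lambda-2)$ comes from. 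The paper's version, by contrast, is self-contained at the level of norm identities but is harder to see through. Either argument would serve; yours is the more illuminating exposition.
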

\begin{proof}
By assumption,  
\eqns
{
\Lambda h(x,x+y)=\Lambda(\no{x+ y}&-\ip{x+y ,N(x)})\le  h(x,x+2y)\\
&=\no{x+2y}-\ip{x+2y,N(x)});
}
therefore
\eqns{
\Lambda \no{x+ y}&\le  h(x,x+2y)+\Lambda\ip{x+ y,N(x)}
\\&= \frac \Lambda2(\no{x+2y}-\ip{x+2y,N(x)})+\Lambda\ip{x+y,N(x)}-\Lp \frac \Lambda2-1\Rp h(x,x+2y)\\&=
\frac \Lambda2(\no{x+2y} +\ip{x,N(x)})-\Lp \frac \Lambda2-1\Rp  h(x,x+2y)\\
&=\frac \Lambda2(\no{x+2y}+\no x)-\Lp \frac \Lambda2-1\Rp h(x,x+2y).
}
Hence
\eqns{0&\le 
\Lambda(\big\|x+ y\big\|-\ip{x+ y,N(x+2y)})
\\
&\le\frac \Lambda2(\no{x+2y}+\no x)-\Lp \frac \Lambda2-1\Rp h(x,x+2y)-\Lambda\ip{x+ y,N(x+2y)}\\
&=\frac\Lambda2(\no{x+2y}+\no{x})-\frac\Lambda2\ip{x+ 2y,N(x+2y)}-\frac\Lambda2\ip{x,N(x+2y)}-\Lp \frac \Lambda2-1\Rp h(x,x+2y)
\\&=\frac\Lambda2(\no{x}-\ip{x,N(x+2y)})
 -\Lp\frac\Lambda2-1\Rp h(x,x+2y)
 \\&=
\frac \Lambda2 h(x+2y,x)-\Lp\frac\Lambda2-1\Rp h(x,x+2y).
}
It follows that
\eqn
{
 h(x,x+2y)\le \frac\Lambda{\Lambda -2}h(x+2y,x). 
}
The final assertion of the lemma results from putting $z=x+2y.$
\end{proof}
\begin{remark}\label{losc} We did not require $x+2y\not=0$ above, while several expressions above are undefined if this does not hold. The conclusion is still correct  if $x+2y=0,$ provided that we define $h(0,z)=0$ for all $z.$  This is the greatest lower-semicontinuous extension of $h$ to cases in which its first argument is 0.
\end{remark}
\section{The  Problem of Two Sticks and Geometric Convexity}\label{tspc}

In this section we assume, for simplicity, that  $\no{\cdot}$ is geometrically convex with  constants $1, \Lambda.$ In this regard, recall Lemma \ref{epc}.  
The next result provides a basic restriction on $\ls1-\bls1$ when $l, \bl$ satisfy the two sticks condition, the equal length condition with $L=1$ (a normalization) and meet a common small ball.  The nature of the theorem is perhaps not  transparent.  We forge ahead and state it directly and then offer some explanatory remarks.  

In the following statement,  the ``directions" of the sticks are denoted by  the unit vectors
\eq{ds}
{
e=\ls 1-\ls 0,\  \be=\bls 1-\bls 0.
}

\begin{theorem} \label{strip} Let $\no{\cdot}$ be geometrically convex with constants $1, \Lambda.$ Let $\z, \bl$ satisfy the two sticks and equal length conditions  with $L=1.$  Assume that $0<\delta<1/4, \xs 0\in\Rn,$ and
\eq{near}
{
\z\cap \bar B_{\delta}(\xs 0)\not=\emptyset,\   \bl\cap \bar B_{\delta}(\xs 0)\not=\emptyset.
}
Assume, moreover, that $\rho>3\delta,$
\eq{notinb}
{
\ls1, \ls0\notin \bar B_{\rho} (\xs 0), 
}
\eq{kappa}
{
\kappa\ge\frac 4{\rho-3\delta},
}
 and 
\eq{bmax}
{
\sigma(e,\kappa\delta)\le \sigma(\be,\kappa\delta). 
}
Then 
\eq{promise}{h(\be,\bls1-\ls0)+h(\be,\ls1-\bls0)\le \frac\Lambda{\Lambda-2}\sigma(\be,\kappa\delta).}
\end{theorem}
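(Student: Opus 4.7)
The plan is to first perform a clean algebraic reduction of the left-hand side, then control the resulting pieces using the intermediate-point data together with geometric convexity and the duality lemma.

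First, I would expand the two $h$-values using the defining formula $h(x,y)=\no y-\ip{y,N(x)}$ and the identity $\ip{\be,N(\be)}=\no{\be}=1$. Writing $\bls1-\ls0=\be+(\bls0-\ls0)$ and $\ls1-\bls0=e-(\bls0-\ls0)$, the inner-product cross terms involving $\bls0-\ls0$ cancel, yielding the identity
\eqns{
h(\be,\bls1-\ls0)+h(\be,\ls1-\bls0)=(\no{\bls1-\ls0}-1)+(\no{\ls1-\bls0}-1)+h(\be,e).
}
The two-sticks condition \eqref{ts} together with $L=1$ says both slacks are nonnegative, so the task reduces to bounding the sum of the two slacks plus $h(\be,e)$ by $\frac{\Lambda}{\Lambda-2}\sigma(\be,\kappa\delta)$.

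Next, I would exploit the intermediate points $\lst\in\z\cap\bar B_\delta(\xs0)$ and $\blst\in\bl\cap\bar B_\delta(\xs0)$, so $\no{\lst-\blst}\le 2\delta$. Writing $\bar s=\no{\lst-\ls0}$ and $\bar t=\no{\blst-\bls0}$, the triangle inequality through $\lst,\blst$ applied to the two-sticks inequalities gives $(\no{\bls1-\ls0}-1)+(\no{\ls1-\bls0}-1)\le 2\no{\lst-\blst}\le 4\delta$ and $|\bar s-\bar t|\le 2\delta$. The constraint \eqref{notinb} forces $\bar s,1-\bar s\ge\rho-\delta$, so the vector $(1/\bar s)(\lst-\blst)-((\bar t-\bar s)/\bar s)\be$, which equals $e-\be$ up to rearrangement via $\lst-\blst=\bar s e-\bar t\be+(\ls0-\bls0)-(\ls0-\bls0)$-type identities, carries a norm bounded by the scale $4\delta/(\rho-3\delta)\le\kappa\delta$ supplied by hypothesis \eqref{kappa}. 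Consequently $h(e,\be)$, which measures the deviation of $\be$ from being positively parallel to $e$, is controlled by $\sigma(e,\kappa\delta)$, and the hypothesis \eqref{bmax} converts this to $\sigma(\be,\kappa\delta)$.

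Finally, I would apply the duality Lemma \ref{theta} to pass from the bound on $h(e,\be)$ back to the bound on $h(\be,e)$, picking up precisely the factor $\Lambda/(\Lambda-2)$: since $\no e=\no\be=1$, we have $\no{\be-e}\le 2=2r\no e$ with $r=1$, so $h(\be,e)\le\frac{\Lambda}{\Lambda-2}h(e,\be)\le\frac{\Lambda}{\Lambda-2}\sigma(\be,\kappa\delta)$. Combined with the slack bound absorbed into the same $\sigma$-scale, this yields \eqref{promise}.

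The main obstacle will be sharpening the slack bound $\epsilon_1+\epsilon_2\le 4\delta$, which is far too crude in its linear-in-$\delta$ form: in the Euclidean test case the actual slack is of order $(\kappa\delta)^2$, matching $\sigma(\be,\kappa\delta)$. The geometric-convexity hypothesis must therefore be used not only to handle $h(\be,e)$ but also to refine the two slack estimates to the correct $\sigma$-scale, and it is the careful simultaneous treatment of these three contributions, mediated by the duality factor $\Lambda/(\Lambda-2)$, that makes the theorem nontrivial.
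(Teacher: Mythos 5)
Your algebraic identity
\[
h(\be,\bls1-\ls0)+h(\be,\ls1-\bls0)=\bigl(\no{\bls1-\ls0}-1\bigr)+\bigl(\no{\ls1-\bls0}-1\bigr)+h(\be,e)
\]
is correct (since $(\bls1-\ls0)+(\ls1-\bls0)=\be+e$ and $\ip{\be,N(\be)}=1$), and it is an instructive rewrite of the left-hand side. But the strategy built on it has a genuine gap. To control $h(\be,e)$ via duality you would need $h(e,\be)=h(e,e+(\be-e))\le\sigma(e,\kappa\delta)$, which requires $\no{e-\be}\le\kappa\delta$. The hypotheses do not supply this: $e-\be=(\ls1-\bls1)-(\ls0-\bls0)$, and neither $\ls1-\bls1$ nor $\ls0-\bls0$ is assumed small. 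Your proposed identity relating $(1/\bar s)(\lst-\blst)-((\bar t-\bar s)/\bar s)\be$ to $e-\be$ in fact leaves over an uncanceled term $(\ls0-\bls0)/\bar s$, which is unbounded; the closeness of the intermediate points does not propagate to closeness of the directions. Indeed, the whole point of Corollary \ref{cstrip} is that $\ls1-\bls1$ is only confined to a slab of width $O(\kappa\delta)$ and may have large norm. There is also a constant problem: even if each of your three pieces could separately be bounded by something of $\sigma(\be,\kappa\delta)$-scale, adding them would overshoot $\frac{\Lambda}{\Lambda-2}\sigma(\be,\kappa\delta)$.

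The paper's proof takes a different route that avoids ever needing $\no{e-\be}$ small. It chooses $\blst\in\bl\cap\bar B_\delta(\xs0)$ and then a special $\lst\in\z\cap\bar B_{3\delta}(\xs0)$ with the tangency property $\ip{\lst-\blst,N(e)}=0$, and expands $\no{\ls1-\bls0}$ via the ``triangle equality" $\no{x+y}=\no x+\no y-h(x+y,x)-h(x+y,y)$ with the kinked split $\ls1-\bls0=(\ls1-\blst)+(\blst-\bls0)$. The tangency choice makes the kinking gain $\no{\ls1-\blst}-\no{\ls1-\lst}$ exactly an $h$-term in a tangent direction, which is bounded by $\no{\ls1-\lst}\sigma(e,\kappa\delta)$ because $\no{\lst-\blst}/\no{\ls1-\lst}\le\kappa\delta$ by \reff{notinb} and \reff{kappa}. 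Duality (Lemma \ref{theta}) then converts $h(\ls1-\bls0,\blst-\bls0)$ into $\frac{\Lambda-2}{\Lambda}h(\be,\ls1-\bls0)$. Doing the same with $\bls1-\ls0$, adding, and using $\no{\ls1-\lst}+\no{\lst-\ls0}=1$ together with the two-sticks inequality gives \reff{promise} directly. That triangle-equality expansion anchored at the intermediate points, together with the tangency choice of $\lst$, is precisely what you are missing; the algebraic decomposition alone cannot deliver the needed $\sigma(\be,\kappa\delta)$-scale estimate on the slacks or on $h(\be,e)$.
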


Here are some explanatory remarks about the statement and the proof to follow.  
First,  it follows from \reff{promise} that 
\eq{kp}
{
h(\be,\ls1-\bls0)=h(\be,\bls1-\bls0+\ls 1-\bls 1)=h(\be,\be+\ls1-\bls 1)\le \frac{\Lambda}{\Lambda -2}\sigma(\be,\kappa\delta).  
}
This is a restriction on where $\ls1-\bls1$ can lie.  In 
Corollary \ref{cstrip} below, it is parlayed  into forcing $\ls1-\bls1$ to lie between parallel planes which are separated by a width comparable to $\delta.$  

The statement involves  the somewhat mysterious  condition \reff{bmax}.   We have in mind, for use in \cite{cc},  not only a pair of sticks, but a collection of them which pairwise  satisfy the two sticks condition and all of which meet a small ball $\bar B_{\delta}(\xs0),$ where $\xs0$ is well away from the endpoints of the sticks. From this collection, we will choose a ``special stick." Here $\bl$ is the special stick, that is, the stick  for which $\sigma(e,\kappa\delta)$ is maximal, corresponding to \reff{bmax}.   Then \reff{promise} holds valid for the other sticks in the collection which also satisfy \reff{notinb}.

As regards the proofs, there is the  ``auxiliary" stick, $[\bls 0, \ls1].$  This is ``kinked" to the point $\blst$ in $\bl,$ as in \reff{wie}.  This kinking, as in \reff{wie}, produces a length gain (the term $\no{\ls1-\blst}$ in \reff{wie} vs $\no{\ls 1-\lst}$, as estimated in \reff{et}) which helps in competition with the strictness in the triangle inequality codified by the triangle equality explained below. In this kinking process, $\blst$ is the point ``kinked to," while $\lst$ is loosely thought of as the point ``kinked from."  A good point to kink from will satisfy the second relation of  \reff{lst}; this is so that \reff{et} holds.   This process is repeated with the second auxiliary stick, $[\ls0,\bls 1]$, kinking from $\blst$ to $\lst,$ and for all this  to end up consistent with the two sticks condition, the conclusion of the theorem must hold.

\proofof{Theorem \ref{strip}.}

By assumption, there exists a point 
\eq{perp}
{
\blst\in \bl\cap \bar B_{\delta}(\xs0).
}
We seek a point
\eq{lst}
{
\lst\in \z\cap \bar B_{3\delta}(\xs0)\iq{for which}\ip{\lst-\blst,N(e)}=0.
}
Suppose that $\li\in \bar B_{\delta}(\xs0)\cap l;$ then 
 \eq{nested}
 { 
 \bar B_{\delta}(\xs0)\subseteq \bar B_{2\delta}(\li)\subseteq \bar B_{3\delta}(\xs 0).
  }
Recalling \reff{notinb}, $\rho>3\delta,$  \reff{nested}, and $w\in \z,$ we may choose $0<t_1< t_2<1$ to be the values of $t$ at which $\ls t$ enters and leaves  $\bar B_{2\delta}(\li).$   Then 
\eqn
{
2\delta=\no{\ls{t_2}-\li}=\ip{\ls{t_2}-\li,N(e)} \i{and} -2\delta =-\no{\ls{t_1}-\li}=\ip{\ls{t_1}-\li,N(e)}. 
}
Hence
\eqns
{
\ip{\ls{t_2}-\blst,N(e)}& =\ip{\ls{t_2}-\li,N(e)}+\ip{\li-\blst,N(e)}
\\&
\ge 2 \delta -\no{\blst-\li}\ge 0;
}
the last relation is due to $\blst, w\in \bar B_\delta(\xs 0).$ 
Similarly, $\ip{\ls{t_1}-\blst,N(e)}\le 0.$ Thus there exists $t\in[t_1,t_2]$ such that \reff{lst} holds with $\lst=\ls t.$ Moreover, by $\lst, \blst\in \bar B_{2\delta}(w),$ 
\eq{nest}
{
\no{\lst-\blst}\le 4\delta. 
}
We also note that, via \reff{notinb}, 
\eqn
{
\no{\ls 1-\lst}\ge \no{\ls1 -\xs 0}-\no{\xs 0-\lst}\ge \rho-3\delta. 
}
Treating $\lst-\ls 0$ similarly and using \reff{kappa}, we find
\eq{frac}
{
\frac 4\kappa\le \min(\no{\ls1-\lst}, \no{\ls0-\lst}).
}

We next note two general identities which will play a role.  The first is a rewrite of the definition of $h,$ 
\eq{iha}
{
\no y=\no x+h(x,y)+\ip{y-x,N(x)}.
}
The second, which we call the ``triangle equality",  is
\eq{ihb}
{
\no{x+y}=\no x+\no y-h(x+y,x)-h(x+y,y).
}
This also follows immediately from the definition of $h,$ or, as we prefer,  
\eqns
{
\no{x+y}&=\ip{x+y,N(x+y)}=\ip{x,N(x+y)}+\ip{y,N(x+y)}
\\&=\no x-(\no x-\ip{x,N(x+y)})+\no y-(\no y-\ip{y,N(x+y)})
\\&
=\no x+\no y-(h(x+y,x)+h(x+y,y)).
}

Using the triangle equality \reff{ihb} we have
\eqs{wie}
{
\no {\ls1-\bls0}&=\no {\ls1-\blst +\blst-\bls0}
\\&=\no{\ls1-\blst}+\no{\blst-\bls0}-h(\ls1-\bls0,\ls1-\blst)
-h(\ls1-\bls0,\blst-\bls0).
}
We will combine this with the following consequence of \reff{iha}. Put $y=\ls 1-\blst$ and $x=\ls1-\lst$ in \reff{iha} and use \reff{lst}, \reff{nest} and \reff{frac} to find
\eqs{et}
{
\no{\ls1-\blst}&=\no{\ls 1-\lst}+h(\ls1-\lst,\ls1-\blst)+\ip{\lst-\blst, N(\ls1-\lst)}
\\& =\no{\ls 1-\lst}+h(\ls1-\lst,\ls1-\lst+\lst-\blst)
\\& \le \no{\ls 1-\lst}+\sigma(\ls1-\lst,\no{\lst-\blst}).
\\& =\no{\ls 1-\lst}+\no{\ls1-\lst}\sigma\Lp \frac{\ls1-\lst}{\no{\ls1-\lst}},\frac{\no{\lst-\blst}}{\no{\ls1-\lst}}\Rp
\\& \le\no{\ls 1-\lst}+\no{\ls1-\lst}\sigma\Lp e,\kappa\delta\Rp.
}

Using the estimate \reff{et} in \reff{wie} results in 
\eqs{half}
{
\no {\ls1-\bls0}&\le\no{\ls 1-\lst}+\no{\ls1-\lst}\sigma\Lp e,\kappa\delta\Rp+
\\&\no{\blst-\bls0}-h(\ls1-\bls0,\ls1-\blst)
-h(\ls1-\bls0,\blst-\bls0).
}
At this point we will drop the nonpositive term $-h(\ls1-\bls0,\ls1-\blst)$ from the right of \reff{half} and use duality (Lemma \ref{theta}) to replace $h(\ls1-\bls0,\blst-\bls0)$ by 
\eqn
{
\frac{\Lambda-2}\Lambda h(\blst-\bls0,\ls1-\bls0).
}
Recall that we are assuming geometrical convexity with constants 1, $\Lambda.$  The estimate needed to justify this last step is therefore, according to Lemma \ref{theta}, 
\eq{need}
{
\no{\ls1-\bls0-(\blst-\bls0)}=\no{\ls1-\blst}\le 2\no{\ls 1-\bls0}.
}
Now, using \reff{nest} and $\delta<1/4, $
\eqn
{
\no{\ls1-\blst}\le \no{\ls1-\lst}+\no{\lst-\blst}\le 1+4\delta<2,
}
while $1\le \no{\ls1 -\bls0}.$
Therefore  \reff{need} holds.  The result of these machinations is:
\eqs{halfa}
{
\no {\ls1-\bls0}&\le\no{\ls 1-\lst}+\no{\ls1-\lst}\sigma\Lp e,\kappa\delta\Rp+
\no{\blst-\bls0}
-\frac{\Lambda-2}\Lambda h(\blst-\bls0,\ls1-\bls0)
\\&=\no{\ls 1-\lst}+\no{\ls1-\lst}\sigma\Lp e,\kappa\delta\Rp+
\no{\blst-\bls0}
-\frac{\Lambda-2}\Lambda h(\be,\ls1-\bls0).
}

We run analogous estimates again: on the one hand
\eqs{halfd}
{
\no{\bls1-\ls0}=\no{\bls 1-\blst}+\no{\blst-\ls 0}-h(\bls 1-\ls0,\bls1-\blst)-h(\bls 1-\ls 0,\blst-\ls0),
}
and on the other
\eqs{halfe}
{
\no{\blst -\ls 0}&=\no{\lst-\ls0}+h(\lst -\ls0,\blst-\ls0)-\ip{\lst-\blst,N(\lst-\ls0)}
\\
&=\no{\lst-\ls0}+h(\lst -\ls0,\lst-\ls0 +\blst-\lst)
\\&\le \no{\lst-\ls0}+\sigma(\lst-\ls0,\no{\blst-\lst})
\\ 
&\le\no{\lst-\ls0}+\no{\lst-\ls0}\sigma\Lp\frac{\lst-\ls0}{\no{\lst-\ls0}},\kappa\delta\Rp 
\\&=\no{\lst-\ls0}+\no{\lst-\ls0}\sigma\Lp e,\kappa\delta\Rp. 
}
Combining \reff{halfd} and \reff{halfe} and playing the same game as before results in
\eq{halff}
{
\no{\bls 1-\ls0}\le \no{\bls1-\blst}+\no{\lst-\ls0}+\no{\lst-\ls0}\sigma\Lp e,\kappa\delta\Rp-\frac{\Lambda-2}{\Lambda}h(\be,\bls 1-\ls 0). 
}
  Adding \reff{halfa}, \reff{halff} and using 
\eqns
{
&2\le \no {\bls0-\ls1}+\no {\ls1-\bls0},\\
&2=\no{\ls1-\lst}+\no{\lst-\ls0}+\no{\bls0-\blst}+\no{\blst-\bls0},
}
we arrive in the promised land  
\eqn
{
\frac{\Lambda-2}\Lambda(h(\be,\bls1-\ls0)+h(\be,\ls1-\bls0))\le \sigma(e,\kappa\delta).
}
The final assumption of the theorem now yields its assertion. 
\hfill $\square$

In the next result, we also assume that $\no{\cdot}$ is ``balanced" in the the following sense.  
\begin{definition} The norm $\no{\cdot}$ is {\it balanced} if there are constants $R>0, K\ge 1$ for which 
\eq{bal}
{
h(x,x+y)\le Kh(x,x-y)\iq{for} \no y\le R\no x. 
} 
\end{definition}
As with geometric convexity, by homogeneity, \reff{bal} holds in general if it holds with $\no x=1.$ This condition is explored further, together with geometric convexity, in Section \ref{vgc}. We remark that the assumption $K\ge 1$ is redundant in that it is implied by \reff{bal}. 

\begin{corollary}\label{cstrip} Let the assumptions of Theorem \ref{strip} be satisfied.  In addition, assume that \reff{bal} holds. Assume further that 
\eq{eta}
{
\no{\ls1-\bls1}\le R\iq{and} \frac{K\Lambda^2}{\Lambda-2}\kappa\delta\le 1. 
}  Let $\by$ satisfy
\eq{by}
{
\no {\by}=\frac{K\Lambda^2}{\Lambda-2}\kappa\delta, \ h(\be,\be+\by)=\sigma\Lp \be, \frac{K\Lambda^2}{\Lambda-2}\kappa\delta\Rp.
}
Then 
\eq{strip!}{
-\frac{K\Lambda^2}{\Lambda-2}\kappa\delta\le \ip{\ls 1-\bls 1,N(\by)}\le \frac{K\Lambda^2}{\Lambda-2}\kappa\delta.
}
Moreover, 
\eq{axya}
{
-\frac{K\Lambda^2}{\Lambda-2}\kappa\delta\le \ip{\be, N(\by)}\le 0.
}
\end{corollary}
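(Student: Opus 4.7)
The plan is to identify two things: first, that \reff{axya} is an immediate consequence of the ``in consequence'' clause of Lemma~\ref{remb}; and second, that \reff{strip!} follows from the Theorem~\ref{strip} estimate, the balanced hypothesis, and a mild scaling estimate for $\sigma$, all fed back through Lemma~\ref{remb}. Write $v := \ls1 - \bls1$ and $t := \frac{K\Lambda^2}{\Lambda-2}\kappa\delta$ throughout. The hypotheses \reff{by} say exactly that $\by$ maximizes $h(\be, \be + y)$ over $\no y \le t$, with the maximum attained at $\no \by = t$, while \reff{eta} guarantees $t \le 1 = \no\be$, which keeps us in the geometrically convex regime. Lemma~\ref{remb} applied with $x = \be$ and $y = \by$ then yields $-t \le \ip{\be, N(\by)} \le 0$, which is \reff{axya}.

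For \reff{strip!}, from Theorem~\ref{strip} and the identity $\ls1-\bls0 = \be + v$ I extract, exactly as in \reff{kp}, the bound $h(\be, \be + v) \le \frac{\Lambda}{\Lambda-2}\sigma(\be, \kappa\delta)$. The balanced hypothesis \reff{bal}, applicable because $\no v \le R$ by \reff{eta}, upgrades this to $h(\be, \be - v) \le K h(\be, \be + v) \le \frac{K\Lambda}{\Lambda-2}\sigma(\be, \kappa\delta)$. Next I need a lower bound on $\sigma(\be, t)$ in terms of $\sigma(\be, \kappa\delta)$. The function $y \mapsto h(\be, \be + y)$ is convex and vanishes at $y = 0$, so $h(\be, \be + \alpha y) \ge \alpha h(\be, \be + y)$ for every $\alpha \ge 1$; choosing $y$ to be a maximizer for $\sigma(\be, \kappa\delta)$ and $\alpha = t/(\kappa\delta) = \frac{K\Lambda^2}{\Lambda-2}$ gives $\sigma(\be, t) \ge \frac{K\Lambda^2}{\Lambda-2}\sigma(\be, \kappa\delta)$. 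Since $\Lambda \ge 1$, this dominates both bounds above, so $h(\be, \be \pm v) \le \sigma(\be, t)$.

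To finish, I invoke the contrapositive of Lemma~\ref{remb}. If $\pm\ip{v, N(\by)} > t = \ip{\by, N(\by)}$, then the half-space condition $\ip{(\be \pm v) - (\be + \by), N(\by)} > 0$ holds, and the strict positivity of the Lagrange multiplier $\alpha$ in \reff{nc} makes the chain \reff{zp} strict, forcing $h(\be, \be \pm v) > \sigma(\be, t)$, a contradiction. Hence $|\ip{v, N(\by)}| \le t$, which is \reff{strip!}.

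The main (and only mild) obstacle is ensuring the contrapositive in the final step produces a strict inequality at the right place; this hinges on $\alpha > 0$ in \reff{nc}, which is already asserted within the proof of Lemma~\ref{remb}, so no new work is required. The choice $t/(\kappa\delta) = K\Lambda^2/(\Lambda-2)$ is exactly calibrated so that the crude linear scaling $\sigma(\be,\alpha s)\ge \alpha\sigma(\be,s)$ suffices, with one extra factor of $\Lambda$ in reserve beyond the $K\Lambda/(\Lambda-2)$ actually needed; it is notable that the full strength of geometric convexity (the super-linear scaling in Remark~\ref{rema}) is not required for this step, though the constant $\Lambda$ still appears through its role in Theorem~\ref{strip}.
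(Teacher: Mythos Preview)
Your proof is correct and follows the same architecture as the paper's: bound $h(\be,\be+v)$ via Theorem~\ref{strip}, bound $h(\be,\be-v)$ via the balanced hypothesis, scale $\sigma(\be,\kappa\delta)$ up to $\sigma(\be,t)$, and feed the result through Lemma~\ref{remb}. The one substantive difference lies in the scaling step. The paper iterates Remark~\ref{rema} (that is, geometric convexity of $\sigma$) $N$ times to obtain the \emph{strict} inequality $\sigma(\be,t) > \frac{K\Lambda}{\Lambda-2}\sigma(\be,\kappa\delta)$, which then meshes directly with the non-strict half-space conclusion of Lemma~\ref{remb}. You instead use only convexity of $y\mapsto h(\be,\be+y)$ to obtain the non-strict linear bound $\sigma(\be,t) \ge \frac{K\Lambda^2}{\Lambda-2}\sigma(\be,\kappa\delta)$, and compensate by observing that a \emph{strict} half-space inequality in Lemma~\ref{remb} forces a strict conclusion, via $\alpha > 0$ in \reff{nc}. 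Your route is more elementary in that geometric convexity is not invoked at this stage---as you point out, the hypothesis $\Lambda > 2$ enters only through Theorem~\ref{strip} and Lemma~\ref{theta}---at the mild cost of reopening the proof of Lemma~\ref{remb} to extract strictness. The paper's route keeps Lemma~\ref{remb} as a black box but needs the integer-$N$ doubling argument \reff{dn}--\reff{ndn} to manufacture the strict gap.
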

\begin{proof} Let $N$ be the least integer for which 
\eq{dn}
{
\frac{K\Lambda}{\Lambda-2}<\Lambda^N. 
}
Then 
\eq{edn}
{
\Lambda^{N-1}\le \frac{K\Lambda}{\Lambda-2}\implies \Lambda^N\le \frac{K\Lambda^2}{\Lambda-2}.
}
Moreover,
\eq{tdn}
{
2^N<\Lambda^N\le \frac{K\Lambda^2}{\Lambda-2},
}
and so, by Remark \ref{rema}, 
\eq{ndn}
{
\frac{K\Lambda}{\Lambda -2} \sigma(\be,\kappa\delta)<\Lambda^N\sigma(\be,\kappa\delta)\le \sigma\Lp\be, 2^N\kappa\delta\Rp\le \sigma\Lp\be, \frac{K\Lambda^2}{\Lambda-2}\kappa\delta\Rp.
}

Let $\by$ be as in \reff{by}. 
Then, by Lemma \ref{remb} and \reff{ndn}
\eq{hmin}
{
\frac{K\Lambda}{\Lambda -2} \sigma(\be,\kappa\delta)<\sigma\Lp\be, \frac{K\Lambda^2}{\Lambda-2}\kappa\delta\Rp\le h(\be, w)\iq{if} \ip{w-(\be+\by),N(\by)}\ge 0. 
}
Recalling that $1\le K,$ according to  \reff{promise} of Theorem \ref{strip}, we must therefore have 
\eqn
{
\ip{\ls1-\bls 0-(\be+\by),N(\by)}=\ip{\ls1-\bls1-\by,N(\by)}<0,
}
which amounts to the rightmost inequality of \reff{strip!}, as 
\eqn
{
\ip{\by,N(\by)}=\no{\by}=\frac{K\Lambda^2}{\Lambda-2}\kappa\delta.
}
For this all to be valid, we need $2^k\kappa\delta,$ $k=1,2,\ldots,N,$  to remain in the range where Remark \ref{rema} applies.  With the observations above, this is exactly the role of the second condition in \reff{eta}. 

On the other hand, suppose that
\eq{sps}
{
\ip{\ls 1-\bls0-(\be-\by),N(\by)}\le 0.
}
Then 
\eqn
{
\ip{-\ls 1+\bls0+2\be -(\be+\by),N(\by)}\ge 0,
}
so 
\eq{dog}
{
h(\be, -\ls 1+\bls0+2\be)\ge \sigma\Lp\be, \frac{K\Lambda^2}{\Lambda-2}\kappa\delta\Rp.
}
But, by \reff{bal}, \reff{kp}, 
\eqs{doga}
{
h(\be, -\ls 1+\bls0+2\be)&=h(\be,\be+( -\ls 1+\bls0+\be))
\\&
\le Kh(\be, \be-( -\ls 1+\bls0+\be))=Kh(\be,\ls 1-\bls0)\le\frac{K\Lambda}{\Lambda -2} \sigma(\be,\kappa\delta). 
}
provided that
\eqn
{
\no{\bls0-\ls1+\be}=\no{\bls1-\ls1}\le R\no{\be}=R,
}
a condition we assumed in \reff{eta}.
The relations \reff{dog}, \reff{doga} are inconsistent in view of \reff{ndn}, so \reff{sps} does not hold.  That is,
\eqn
{
\ip{\ls 1-\bls0-(\be-\by),N(\by)}=\ip{\ls 1-\bls 1+\by,N(\by)}\ge 0,
}
which is the leftmost inequality of \reff{strip!}. 

 Finally, \reff{axya} is merely an incarnation of \reff{axy}; it is restated so as to have all essential information in one place. 

 \end{proof}
\section{\label{vgc} Verifying Geometric Convexity, etc. }

The main goal of this section is to prove that the norms $\nop{\cdot}$ satisfy all the conditions used in Section  \ref{tspc}.  This verification was no small task.  We succeeded in verifying these estimates with a direct approach only for $3/2<p<\infty,$ and this led us eventually to the  auxiliary concepts ``in the tangent plane" employed in this section, from which full estimates can be then derived.  

\subsection{\label{ntp} Notions ``in the Tangent Plane"}

\begin{definition}\label{dpctp} The norm $\no{\cdot}$ is {\it geometrically convex in the tangent plane} with constants $\Lambda, r,$  provided that $\Lambda>2, r>0,$ and 
\eq{pctp}
{
h(x,x+2y)\ge \Lambda h(x,x+y) 
}
whenever
\eq{pt}
{
\no y\le r\no x\iq{and}\ip{y,N(x)}=0. 
}
\end{definition}

The second condition of \reff{pt} is that $y$ is a tangent direction to the sphere through $x$ at $x.$ In the proceedings we will use, without further comment, that if $\ip{y,N(x)}=0,$ then 
\eqn
{
h(x,x+y)=\no{x+y}-\ip{x+y,N(x)}=\no{x+y}-\no x. 
}
\begin{definition}\label{doubling}The norm $\no{\cdot}$   is {\it doubling} with constants $T, r$ if  
\eq{fpc}
{
h(x,x+2y)\le Th(x,x+y) \iq{for}\no y\le r\no x. 
}
The norm $\no{\cdot}$   is {\it doubling in the tangent plane} with constants $T, r$ if  \reff{fpc} holds provided also that $\ip{y,N(x)}=0.$  
\end{definition}
\begin{definition}\label{baltp}The norm $\no{\cdot}$   is {\it balanced in the tangent plane} with constants $r, K>0$ if 
\eq{btp}
{
h(x,x+y)\le Kh(x,x-y)
}
whenever \reff{pt} holds. 
\end{definition}

If we say that $\no{\cdot}$ is geometrically convex in the tangent plane, this means it is geometrically convex in the tangent plane with some constants $\Lambda, r;$ likewise for the various doubling and balanced conditions.  However, the next result shows that the parameter $r$ can be dispensed with in the tangential doubling and balanced conditions. 

\begin{lemma}\label{uni} Let $\no{\cdot}$ be doubling in the tangent plane. Then there exists a constant $T$ such that 
\eq{gdub}
{
h(x,x+2y)\le T h(x, x+y) \iq{for} x\not=0, \ip{y,N(x)}=0. 
}
Similarly, let $\no{\cdot}$ be balanced in the tangent plane.  Then there exists a constant $K$ such that 
\eq{bdub}
{
h(x,x+y)\le Kh(x, x-y) \iq{for}x\not=0, \ip{y,N(x)}=0. 
}
\end{lemma}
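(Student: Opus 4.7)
\medskip
\noindent\emph{Proof plan.} The strategy is to reduce both assertions to uniform bounds on quotients of $h,$ split according to the size of $\no y$ into a small-$y$ regime (covered by hypothesis), a large-$y$ regime (covered by the reverse triangle inequality), and a compact intermediate regime (covered by compactness and strict convexity).

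By the homogeneity property (iii) of Remark \ref{props}, both sides of each of \reff{gdub} and \reff{bdub} scale identically under $(x,y)\mapsto (\lambda x,\lambda y)$ for $\lambda>0,$ so one may assume $\no x=1.$ With $\ip{y,N(x)}=0,$ the definition of $h$ simplifies to $h(x,x+ty)=\no{x+ty}-1$ for $t\in\R.$ Let $T_0,K_0,r$ be the constants from the tangential doubling and balanced hypotheses, and set $M:=\max(r,4).$

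For $\no y\le r,$ the hypotheses give the bounds $T_0$ and $K_0$ directly. For $\no y\ge M,$ the reverse triangle inequality together with $\no x=1$ yields $\no y-1\le\no{x\pm y}\le\no y+1$ and $2\no y-1\le\no{x+2y}\le 2\no y+1,$ so that (using $\no y\ge 4$)
\eqn
{
\frac{h(x,x+2y)}{h(x,x+y)}\le\frac{2\no y}{\no y-2}\le 4\iq{and}\frac{h(x,x+y)}{h(x,x-y)}\le\frac{\no y}{\no y-2}\le 2.
}

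In the intermediate regime $r\le \no y\le M,$ the set $K_*:=\set{(x,y):\no x=1,\ \ip{y,N(x)}=0,\ r\le\no y\le M}$ is compact. On $K_*,$ both $h(x,x+y)$ and $h(x,x-y)$ are strictly positive: by (viii) of Remark \ref{props}, a vanishing value would force $x\pm y=\alpha x$ for some $\alpha\ge 0,$ and combined with $\ip{y,N(x)}=0$ this gives $y=0,$ contradicting $\no y\ge r>0.$ By continuity on the compact $K_*,$ both quantities attain a common positive minimum $c>0,$ while $h(x,x+2y)\le 2\no y\le 2M.$ Hence both ratios are bounded by $2M/c$ on $K_*.$ Taking $T$ (respectively $K$) to be the maximum of the three regime bounds completes the proof. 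The main subtlety is the intermediate regime, where strict convexity of $\no{\cdot}$ is essential to obtain the uniform positive lower bound on $h(x,x\pm y).$
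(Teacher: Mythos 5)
Your proof is correct and is essentially the same argument as the paper's, merely organized as a direct three-regime decomposition (small $\no y$: hypothesis; large $\no y$: reverse triangle inequality; intermediate $\no y$: compactness plus strict convexity via Remark \ref{props}(viii)) rather than the paper's contradiction argument, where a divergent sequence is shown to violate one of these three cases after passing to a convergent subsequence. The underlying ideas — homogeneity to normalize $\no x=1,$ the bound $h(x,x+2y)\le 2\no y,$ the lower bound $h(x,x+y)\ge\no y-2,$ and strict convexity forcing $h(x,x\pm y)>0$ when $\ip{y,N(x)}=0$ and $y\not=0$ — are identical.
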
 
\begin{proof}
We establish \reff{gdub}. First notice that, by homogeneity,  if \reff{gdub} holds for $\no x=1,$ then it holds for all $x\not=0.$ Thus we assume that $\no x=1.$ If there is no such $T,$ then there exists sequences $\xs j, \ys j$ such that
\eqn
{
\no{\xs j}=1,\ \ys j\not=0,\ \ip{\ys j, N(\xs j)}=0\iq{for}j=1,2,\cdots ,
}
for which
\eqn
{
h(\xs j,\xs j+2\ys j)> j h(\xs j, \xs j+\ys j). 
}
We may assume that $\xs j\ra x$ for some unit vector $x$ and one of 
\eqn
{
{\rm (i)}\ \ys j\ra 0,\ {\rm (ii)}\ \no{\ys j}\ra \infty,\ {\rm (iii)}\ \ys j\ra y\not=0,
}
holds. Case (i) cannot occur, since we assumed that $\no{\cdot}$ is doubling in the tangent plane.  Case (ii) cannot occur, for 
\eqn
{
h(\xs j,\xs j+2\ys j)\le 2\no{\ys j},\    h(\xs j, \xs j+\ys j)\ge \no{\ys j}-2. 
}
Case (iii) cannot occur, for then
\eqn
{
h(\xs j, \xs j+2\ys j)\ra h(x,x+2y),\ h(\xs j, \xs j+\ys j)\ra h(x,x+y)\not=0.
}
The assertion that $h(x,x+y)\not=0$ at the end above holds since $y\not=0$ and, clearly, $\ip{y,N(x)}=0,$ which imply that $x+y$ is not positively parallel to $x.$ The proof of \reff{bdub} runs the same way. 
\end{proof}

\subsection{\label{tptf} From Tangent Plane Estimates to Full Estimates}
The proofs of the results stated in the next theorem contain estimates which  are referred to elsewhere, but are not recorded in the theorem itself. 

\begin{theorem}\label{dtptp} Let $\no{\cdot}$ be doubling in the tangent plane. Then $\no{\cdot}$ is doubling. Moreover, if $\no{\cdot}$ is also geometrically convex in the tangent plane, then $\no{\cdot}$ is geometrically convex.  Further, if $\no{\cdot}$ is also balanced in the tangent plane, then it is balanced.  
\end{theorem}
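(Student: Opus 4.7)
The approach is to decompose $y$ along $x$ and tangentially, reducing each of the three conditions to its tangent-plane version. By homogeneity assume $\no x = 1$; write $y = \alpha x + z$ with $\alpha = \ip{y, N(x)}$ and $\ip{z, N(x)} = 0$, so $|\alpha| \le \no y$ and $\no z \le 2\no y$. Since $(1+t\alpha) x$ is positively parallel to $x$ whenever $1 + t\alpha > 0$, the homogeneity of the norm produces the key identity
\[
 h(x, x + t y) \;=\; (1 + t\alpha) \, h\!\left(x,\, x + \frac{t z}{1 + t \alpha}\right),
\]
in which $tz/(1+t\alpha)$ is tangential to $x$. Setting $w = z/(1+\alpha)$, $\mu = 2(1+\alpha)/(1+2\alpha)$, and $f(s) := h(x, x + s w)$, this gives $h(x, x+y) = (1+\alpha) f(1)$ and $h(x, x+2y) = (1+2\alpha) f(\mu)$, so $h(x, x + 2y)/h(x, x + y) = (2/\mu)\, f(\mu)/f(1)$.

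For doubling, choose $r'$ small enough that $|\alpha| \le r'$ forces $\mu$ into a bounded interval; Lemma \ref{uni} and iteration give $f(\mu)/f(1) \le C$, yielding full doubling. For balance, analogously write $h(x, x - y) = (1 - \alpha)\, h(x, x - z/(1 - \alpha))$. The two tangential directions $z/(1\pm\alpha)$ differ by the factor $(1+\alpha)/(1-\alpha) \in [1/2, 2]$ when $|\alpha|\le 1/3$, so tangent-plane balance applied to $w$, combined with tangent doubling to absorb this scale ratio, delivers the full balance inequality.

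The geometric-convexity case is the main obstacle. We need $(2/\mu) f(\mu)/f(1) \ge \Lambda' > 2$ uniformly for $|\alpha| \le r'$. For $\mu \ge 2$, the chord from $(1, f(1))$ to $(2, f(2))$ extrapolated forward yields $f(\mu) \ge f(1) + (f(2) - f(1))(\mu - 1)$; combined with tangent-plane convexity $f(2) \ge \Lambda f(1)$, this gives $(2/\mu) f(\mu)/f(1) \ge \Lambda$. The delicate case is $\mu \in (1, 2]$: bare convexity alone gives only $f(\mu) \ge \mu f(1)$, and the ratio degenerates to $\ge 2$. To upgrade, I also invoke tangent-plane convexity at the subscale $w/2$: $f(1) \ge \Lambda f(1/2)$, i.e.\ $f(1/2) \le f(1)/\Lambda$. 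The three-slope inequality for convex $f$ then produces
\[
f(\mu) \;\ge\; f(1) + 2(\mu - 1)\bigl(f(1) - f(1/2)\bigr) \;\ge\; f(1)\left[1 + \tfrac{2(\mu - 1)(\Lambda - 1)}{\Lambda}\right],
\]
and $(2/\mu)[1 + 2(\mu - 1)(\Lambda - 1)/\Lambda]$ strictly exceeds $2$ for every $\mu > 1$. Choosing $r'$ small enough that $\mu \ge \mu_0 > 1$ throughout the allowed range of $\alpha$ then produces $\Lambda' > 2$. The principal difficulty is precisely this need for tangent-plane convexity at two distinct scales on $(1, 2]$: naive convexity alone cannot break the barrier $2$, and the new constants $\Lambda', r'$ depend on the tangent-plane convexity, doubling, and balance constants.
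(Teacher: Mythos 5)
Your proof is correct, and in the geometric-convexity step it takes a genuinely different route from the paper. Both arguments start from the same tangential decomposition $y=\alpha x+\varepsilon x^\perp$ (the paper's Lemma \ref{decomp}) and the resulting identity reducing everything to a one-variable convex function of the tangential scale; the difference is in how the factor strictly greater than $2$ is extracted. The paper uses tangent-plane \emph{doubling} to show that the normalized convex function $g(t)=(\no{x+t\varepsilon x^\perp}-\no x)/(\no{x+\varepsilon x^\perp}-\no x)$ satisfies $g(4)\le T^2$, hence is Lipschitz on $[0,2]$ with constant $L=(T^2-1)/2$; for small $|\alpha|$ the perturbed arguments $1/(1+\alpha)$, $1/(1+2\alpha)$ stay close to $1$, and the final ratio $\Lambda\frac{1+2\alpha}{1+\alpha}\frac{1-4L|\alpha|}{1+2L|\alpha|}$ can be pushed as close to $\Lambda$ as desired. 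You instead apply tangent-plane geometric \emph{convexity} at two scales of $f(s)=h(x,x+sw)$ --- at $s=1,2$ on the branch $\mu\ge 2$, and at $s=1/2,1$ combined with the three-slope inequality on $1<\mu\le 2$ --- which avoids the Lipschitz machinery entirely and does not use tangent-plane doubling for this step at all (doubling is still needed, exactly as you invoke it, for the doubling and balance conclusions). The paper's proof buys a constant tending to $\Lambda$ as $|\alpha|\to 0$; yours yields $\min(\Lambda,3-2/\Lambda)$ there, smaller but still $>2$, which is all that the statement requires. Two small things to tighten in a full write-up: the bound $\mu\ge\mu_0>1$ is automatic for any $|\alpha|\le\kappa<1/2$, since $\mu=2(1+\alpha)/(1+2\alpha)$ is decreasing in $\alpha$ with $\mu=3/2$ at $\alpha=1/2$, so the binding restriction on $r'$ is actually to keep $1+2\alpha$ bounded below and $\no{w}\le 2r$; and you should record explicitly that $\no{w/2}\le r\no x$ holds once $\no y$ is small (e.g.\ $\no w\le 4\no y$ for $|\alpha|\le 1/2$), so the half-scale application of tangent-plane convexity is legitimate under Definition \ref{dpctp}.
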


We prepare another simple lemma.  Given a unit vector $x$,  because $\ip{x,N(x)}=1\not=0,$  we can decompose an arbitrary vector $y$ into the sum of a scalar multiple of $x$ and a vector perpendicular to $N(x).$ We record this, with some more notational detail we will use.   
\begin{lemma}\label{decomp}
Let $\no x=1$ and $y\in\Rn.$  Then  there exists $\varepsilon, \alpha\in\R$ and $x^\perp\in \Rn$ with the properties
\eq{edc}
{
y=\alpha x+\varepsilon x^\perp,\ \varepsilon\ge 0,\  \no{x^\perp}=1, \ \ip{x^\perp,N(x)}=0.
}
Moreover, $\alpha, \varepsilon,$ are unique and $ x^\perp$ is unique if $y$ is not a multiple of $x.$ 
\end{lemma}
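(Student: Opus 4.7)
The strategy is essentially a direct-sum decomposition of $\Rn$ into the line $\R x$ and the hyperplane $H := \{z \in \Rn : \ip{z,N(x)} = 0\}$. These are complementary because $\ip{x,N(x)} = \no x = 1 \neq 0$, so $x \notin H$ and $\Rn = \R x \oplus H$. Pairing with $N(x)$ provides the projection onto the $\R x$-factor.

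Concretely, the plan is to set $\alpha := \ip{y,N(x)}$, which gives
\eqn{\ip{y - \alpha x, N(x)} = \ip{y,N(x)} - \alpha \ip{x,N(x)} = \alpha - \alpha = 0,}
so $y - \alpha x \in H$. If $y - \alpha x \neq 0$, define
\eqn{\varepsilon := \no{y - \alpha x} > 0, \qquad x^\perp := \frac{y - \alpha x}{\varepsilon},}
and check that the four conditions of \reff{edc} follow immediately. If instead $y - \alpha x = 0$ (the case when $y$ is a multiple of $x$), set $\varepsilon := 0$ and choose $x^\perp$ to be any unit vector in $H$; such a vector exists because $H$ has dimension $n - 1 \geq 1$. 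In this degenerate case the identity $y = \alpha x + \varepsilon x^\perp$ still holds since the $\varepsilon x^\perp$ term vanishes.

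For uniqueness, suppose $y = \alpha' x + \varepsilon' z$ with $\varepsilon' \geq 0$, $\no z = 1$, $\ip{z, N(x)} = 0$. Pairing with $N(x)$ collapses the $z$-term and forces $\alpha' = \ip{y,N(x)} = \alpha$. Then $\varepsilon' z = y - \alpha x$, and taking the norm $\no{\cdot}$ of both sides (using $\varepsilon' \geq 0$ and $\no z = 1$) yields $\varepsilon' = \no{y - \alpha x} = \varepsilon$. If this common value is positive, i.e.\ $y$ is not a multiple of $x$, then dividing gives $z = (y - \alpha x)/\varepsilon = x^\perp$, proving uniqueness of $x^\perp$ as well.

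There is no real obstacle here: the lemma is a direct consequence of the duality identity $\ip{x,N(x)} = \no x$, which lets $N(x)$ play the role that the unit vector dual to $x$ would play in an inner-product setting. The only mildly delicate point is the degenerate case $y \in \R x$, where $x^\perp$ is genuinely non-unique, as the statement itself acknowledges.
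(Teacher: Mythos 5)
Your proof is correct and follows essentially the same route as the paper's: compute $\alpha=\ip{y,N(x)}$ by pairing with $N(x)$, observe that $y-\alpha x$ then lies in the hyperplane $\ip{\cdot\,,N(x)}=0$, and normalize to obtain $\varepsilon$ and $x^\perp$ (handling the degenerate case $y-\alpha x=0$ separately). The only difference is that you spell out the uniqueness argument explicitly, whereas the paper simply declares it evident.
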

\begin{proof}
If \reff{edc} holds,  $\alpha$ may be computed by
\eqn
{
\ip{y,N(x)}=\alpha\ip{x,N(x)}+\varepsilon \ip{x^\perp, N(x)}=\alpha;}
thus 
\eq{dela}
{
\alpha=\ip{y,N(x)}. 
}
With this $\alpha,$ $y-\alpha x$ is orthogonal to $N(x).$ If $y=\alpha x,$ then $\varepsilon=0$ and we may choose $x^\perp$ to be any unit vector orthogonal to $N(x)$.  If $y\not=\alpha x,$ then $\varepsilon\ge 0$ and \reff{edc} imply
\eq{xperp}
{
x^\perp=\frac{y-\alpha x}{\no{y-\alpha x}},\ \varepsilon =\no{y-\alpha x},
} 
and we have our decomposition, whose uniqueness is evident. 
\end{proof}

\proofof{Theorem \ref{dtptp}.}
We assume throughout that $\no{\cdot}$ is doubling in the tangent plane. Then we invoke Lemma \ref{uni} to assume that \reff{gdub} holds. 

 To begin, we  
assume that $\no{\cdot}$ is also geometrically convex in the tangent plane with  constants $\Lambda, r$ and show that then $\no{\cdot}$ is geometrically convex  with constants established during the proof.  These arguments set the format for showing that $\no{\cdot}$ is doubling, and, if it is balanced in the tangent plane, then it is balanced.  This last we leave to the reader, the pattern having been well established by that point.  

 We may assume that $\no x=1.$   We use the coordinates of \reff{edc} throughout and assume that 
\eq{bdel}
{
|\alpha|\le \kappa\le \frac 14.
}
where $\kappa>0$ will be further restricted later.  This guarantees that $0<1/2\le 1+\alpha, 1+ 2\alpha.$

Observe that our assumptions imply 
\eqs{dtpb}
{
h(x,x+2y)&= h(x, x+2(\alpha x+\varepsilon x^\perp))
\\&
=h((1+2\alpha) x, (1+2\alpha) x+2\varepsilon x^\perp)
\\&
=\no{(1+2\alpha) x+2\varepsilon x^\perp}-(1+2\alpha)\no x
\\&
\ge \Lambda( \no{(1+2\alpha) x+\varepsilon x^\perp}-(1+2\alpha)\no x)
\\&
=\Lambda(1+2\alpha)\Lp \no{ x+\frac{\varepsilon}{1+2\alpha} x^\perp}-\no x\Rp
}
provided that 
\eq{pth}
{
\varepsilon\le r\no{(1+2\alpha)x}=(1+2\alpha)r. 
}
We seek to bound $h(x,x+2y)$ below by a multiple greater than $2$ of $h(x,x+y).$ Now, as above,  
\eqs{wa}
{
h(x,x+y)&= \no{(1+\alpha) x+\varepsilon x^\perp}-(1+\alpha)\no x
\\&=(1+\alpha)\Lp \no{x+\frac{\varepsilon}{1+\alpha}x^\perp}-\no x\Rp.
}
If $\varepsilon=0,$ then $h(x,x+y)=h(x,x+2y)=0. $ We assume, therefore, that $\varepsilon>0,$  and then $h(x,x+y), h(x,x+2y)>0$. 
Define 
\eq{dg}
{
g(t)=\frac{\no{x+t\varepsilon x^\perp}-\no x}{\no{x+\varepsilon x^\perp}-\no x}.
}
Then 
\eq{pg}
{
g:[0,\infty)\ra [0,\infty)\i{is convex,} \lim_{t\downarrow 0}\frac{g(t)}t=0,\  g(1)=1. }

Moreover, by \reff{gdub}, 
\eq{ub}
{
g(4)\le Tg(2)\le T^2 g(1)=T^2.
}
As  $g$ is  convex and satisfies \reff{pg}, \reff{ub}, it is Lipschitz continuous on  [0,2].  Note that \reff{bdel} implies 
\eq{bda}
{
 0\le \frac1{1+2\alpha}, \frac1{1+\alpha}\le \frac 1{1-2\kappa}\le 2.
}

By \reff{pg},  $(g(4)-g(2))/2$ is a Lipschitz constant for $g$ on $[0,2]$. However, by \reff{ub} and \reff{pg}, 
\eq{glip}
{
\frac{g(4)-g(2)}{2}\le \frac{g(4)-g(1)}2\le \frac{T^2-1}2. 
}
Thus 
\eq{el}
{
L= \frac{T^2-1}2\iq{is a Lipschitz constant for} g \i{on} [0,2].
}
Therefore
\eqns
{
&g\Lp\frac 1{1+\alpha}\Rp\le g(1)+L\frac{|\alpha|}{1+\alpha}\le 1+2 L|\alpha|,
\\& 
g\Lp\frac 1{1+2\alpha}\Rp\ge g(1)-L\frac{2|\alpha|}{1+2\alpha}\ge 1-4L|\alpha|.
}
Combining this information with \reff{bdel}, \reff{dtpb}, \reff{wa},  we find
\eq{punch}
{
\frac{h(x,x+2y)}{h(x,x+y)}\ge \Lambda\frac{1+2\alpha}{1+\alpha}\frac {g\Lp \frac 1{1+2\alpha}\Rp}{g\Lp \frac 1{1+\alpha}\Rp}\ge \Lambda\frac{1+2\alpha}{1+\alpha}\frac{1-4L|\alpha|}{ 1+2L|\alpha|}\ge \Lambda \frac{1-2\kappa}{1+\kappa}\frac{1-4L\kappa}{ 1+2L\kappa}.
}
 Choosing $\kappa$ sufficiently small, the right hand side can be made as close to $\Lambda$ as desired; hence it can be made larger than 2.   Let us review the restrictions used in this estimate.  They are  \reff{bdel}, \reff{pth}, \reff{ub}, and then a further restriction on $\kappa$ to make the quantity on the right of \reff{punch} as close to $\Lambda$ as we chose. 
 To have \reff{pth} hold in the presence of \reff{bdel}, it suffices to have 
  \eq{nep}
 {
 \varepsilon \le (1-2\kappa)r.
 }
In all, we require $\kappa\le 1/4$ to be small enough to guarantee that the right hand side of \reff{punch} is as close to $\Lambda$ as we specify (in particular, greater than 2), \reff{nep} and $|\alpha|\le \kappa.$  To finish, we need to  express these requirements  in terms of 
\eqn
{
y=\alpha x+\varepsilon x^\perp. 
}
We have
\eq{ed}
{
|\alpha|=|\ip{y,N(x)}|\le \no y
}
and 
\eq{angle}
{
\varepsilon=\no{y-\ip{y,N(x)}x}\le 2\no y. 
}
Thus 
\eq{ey}
{
\max(\varepsilon,|\alpha|)\le  2\no y;
}
therefore it suffices to have 
\eq{iny}
{
2\no y\le  \min(\kappa,(1-2\kappa)r). 
}
This condition is independent of the unit vector $x.$
 \hfill $\square$
 
 We turn to the demonstration that doubling in the tangent plane implies doubling, using the coordinates \reff{edc}.  First, by doubling in the tangent plane, by machinations as in \reff{dtpb},
 \eqns
 {
h(x,x+2y)&=(1+2\alpha)\Lp\no{x+\frac{2\varepsilon}{1+2\alpha}x^\perp}-\no x\Rp
\\&
= (1+2\alpha)h\Lp x,x+\frac{2\varepsilon}{1+2\alpha}x^\perp\Rp
\\& 
\le T(1+2\alpha)h\Lp x,x+\frac{\varepsilon}{1+2\alpha}x^\perp\Rp
\\&
= T(1+2\alpha)\Lp \no{x+\frac{\varepsilon}{1+2\alpha}x^\perp}-\no x\Rp
 }
 provided that \reff{bdel} holds. 
 
 On the other hand, 
 \eqns
 {
 h(x,x+y)&= \no{(1+\alpha)x+\varepsilon x^\perp}-(1+\alpha)\no x
 \\& =(1+\alpha)\Lp\no{x+\frac{\varepsilon}{1+\alpha} x^\perp}-\no x \Rp.
 }
 Now we may proceed as before to conclude that 
 \eq{douba}
 {
 h(x,x+2y)\le T\frac{1+2\alpha}{1+\alpha} \frac{1+4L|\alpha|}{1-2L|\alpha|}h(x,x+y),
  }
 where $L$ is given by \reff{el}, 
 so long as 
 \eq{mbd}
 {
 2L|\alpha|<1. 
 }
 We do not need to make $\kappa$ ``sufficiently small" here; \reff{bdel}, \reff{mbd} were the only restrictions employed.   We atomize this a bit more.  Let \reff{gdub} hold and $L$ be given by \reff{el}. Then \reff{douba} holds provided that
 \eq{doubb}
 {
 |\alpha|=|\ip{y,N(x)}|\le \min(1/4,1/(2L)). 
 }
 
The proof that balanced in the tangent plane implies balanced is a simple variation of arguments already given.  The result is   
\eq{fbal}
{
h(x,x+y)\le K\frac{1+\alpha}{1-\alpha}\frac {1+2L|\alpha|}{1-2L|\alpha|}h(x,x-y),
}
provided that \reff{doubb} holds. The estimates \reff{douba}, \reff{fbal} remain valid if $x$ is not a unit vector and \reff{doubb} is replaced by
\eq{doubg}
{
|\alpha|=|\ip{y,N(x)}|\le \no x\min(1/4,1/(2L)). 
}
\hfill$\square$ 
 
  \subsection{\label{lpc} The Norm $\nop{\cdot}$ in the Tangent Plane}
  We turn to the proofs that $\nop{\cdot}$ has all the properties above.
  
  \begin{theorem}\label{lptp} Let $1<p<\infty.$ Then $\nop{\cdot}$ is geometrically convex and doubling and balanced in the tangent plane.
  \end{theorem}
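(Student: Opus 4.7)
The plan is to verify all three tangent-plane properties for $\nop{\cdot}$ by reducing them to a scale-invariant one-variable inequality for $s\mapsto |s|^p$. By homogeneity I may assume $\nop x=1$, so the tangent-plane condition $\ip{y,N(x)}=0$ becomes $\sum_i|x_i|^{p-1}\sign(x_i)y_i=0$, and in this setting $h(x,x+y)=\nop{x+y}-1$.  All three inequalities thus amount to direct comparisons of $\nop{x\pm y}-1$ and $\nop{x\pm 2y}-1$.

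The key move is to pass to the $p$-th power. Set $\psi(t):=\nop{x+ty}^p=\sum_i|x_i+ty_i|^p$ and $\phi(t):=\psi(t)^{1/p}$.  Then $\psi(0)=1$, and the tangency condition gives $\psi'(0)=p\sum_i|x_i|^{p-1}\sign(x_i)y_i=0$.  For each coordinate define the per-coordinate Bregman remainder
\eqn
{
r_i(t):=|x_i+ty_i|^p-|x_i|^p-tp|x_i|^{p-1}\sign(x_i)\,y_i,
}
so each $r_i$ is convex and nonnegative with $r_i(0)=r_i'(0)=0$, and $\sum_i r_i(t)=\psi(t)-1$. The tangency-induced cancellation of linear terms means that $\psi(t)-1$ inherits \emph{all} its structure from the per-coordinate remainders.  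Uniform boundedness of $\no y$ allows me to pass from $\psi$ to $\phi=\psi^{1/p}$ at the cost of harmless multiplicative constants (via $(1+u)^{1/p}-1=u/p+O(u^2)$ for the small values of $u$ that occur), so the three tangent-plane estimates for $\phi$ follow from analogous estimates for $\psi$.

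Everything then reduces to a single one-variable lemma:  there exist $\Lambda_p>2$, $T_p$, $K_p$ and $r>0$ such that for every $a\in\R$ and every $|s|\le r$ the function $\rho_{a,y}(s):=|a+sy|^p-|a|^p-sp|a|^{p-1}\sign(a)y$ satisfies
\eqn
{
\Lambda_p\rho_{a,y}(1)\le\rho_{a,y}(2),\quad \rho_{a,y}(2)\le T_p\rho_{a,y}(1),\quad \rho_{a,y}(1)\le K_p\rho_{a,y}(-1).
}
This family is scale invariant under $(a,y)\mapsto(\lambda a,\lambda y)$, so after rescaling it suffices to treat the compact set $\max(|a|,|y|)=1$ together with the two limiting cases.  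At $a=0$, $\rho(s)=|sy|^p$, so $\rho(2)/\rho(1)=2^p$ and $\rho(1)/\rho(-1)=1$.  At $y\to 0$ with $|a|=1$, Taylor expansion gives $\rho(s)\sim\tfrac{p(p-1)}{2}s^2y^2$, so $\rho(2)/\rho(1)\to 4$ and $\rho(1)/\rho(-1)\to 1$.  Both extremes produce ratios strictly greater than $2$, so continuity and compactness on the rescaled parameter set deliver uniform constants.  Summing the per-coordinate inequalities then yields the three tangential estimates for $\psi$, hence for $\phi$, hence for $h$; combining with Theorem \ref{dtptp} upgrades them to full geometric convexity, doubling, and balance.

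The main obstacle is the interpolation between the ``bulk'' regime $|y_i|\ll|x_i|$ (where $r_i$ looks quadratic, ratio near $4$) and the ``edge'' regime $|x_i|\lesssim|y_i|$ (where $r_i$ looks like $|y_i|^p|t|^p$, ratio $2^p$) with a \emph{single} constant $\Lambda_p>2$. For $1<p<2$ this is particularly delicate because $|\cdot|^p$ fails to be $C^2$ at the origin, so the Taylor argument is not uniform in $a$; this is presumably why the authors isolated the difficulty by introducing the tangent-plane framework, which localizes everything to a one-dimensional Bregman computation amenable to a compactness argument on the scale-invariant parameter ratio.
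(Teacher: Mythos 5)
Your overall strategy---reduce the three tangent-plane properties for $\nop{\cdot}$ to a one-variable Bregman inequality for $s\mapsto|s|^p$, sum over coordinates using the tangency condition to kill the linear terms, and then pass from $\nop{\cdot}^p$ back to $\nop{\cdot}$ via a Taylor expansion of $t\mapsto t^{1/p}$ at $t=1$---is exactly the paper's. The paper proves Lemma~\ref{onev}, which is your one-variable lemma, and then Theorem~\ref{lptp} is assembled just as you outline, with Theorem~\ref{dtptp} used to upgrade to the full (non-tangential) notions. For the doubling (b) and balanced (c) parts your compactness/rescaling argument is complete: any finite bound on the ratio suffices, and boundedness away from $0$ and $\infty$ follows from the limits you compute at $a=0$ and $y\to 0$ together with continuity and compactness on the normalized parameter set.

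For the geometric-convexity part (a), however, there is a genuine gap. You need the uniform constant $\Lambda_p$ to be \emph{strictly larger than $2$}, and your argument only verifies this at the two extremes: the ratio $\to 4$ as $y\to 0$ with $|a|=1$ fixed, and $\to 2^p$ at $a=0$. Continuity and compactness on the normalized parameter set guarantee that the infimum of the ratio is attained, but they say nothing about whether that infimum exceeds $2$; the ratio could a priori dip to $2$ or below at an interior parameter value, and nothing you wrote rules that out. The paper spends real effort here: after reducing to $g(z)=|1+z|^p-1-pz$ and noting $g(2z)/g(z)\to 4$ at $0$ and $\to 2^p$ at $\pm\infty$, it shows that $g(2z)-2g(z)=|1+2z|^p-2|1+z|^p+1$ has $z=0$ as its \emph{only} zero by analyzing the sign of its derivative $d(z)=2p\bigl(|1+2z|^{p-1}\sign(1+2z)-|1+z|^{p-1}\sign(1+z)\bigr)$, which vanishes only at $z=0$ and $z=-2/3$ with $d(-2/3)<0$. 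That is the missing ingredient. (A cleaner route you could have used: $|1+z|^p=\bigl|\tfrac12\cdot 1+\tfrac12(1+2z)\bigr|^p$, and strict convexity of $t\mapsto|t|^p$ for $p>1$ gives $2|1+z|^p<1+|1+2z|^p$ for $z\neq 0$, i.e.\ $g(2z)>2g(z)$ pointwise; \emph{then} your compactness argument, applied to the now-known-to-be-$>2$ continuous ratio, does deliver a uniform $\Lambda_p>2$.) Separately, your statement of the one-variable lemma carries a stray ``for every $|s|\le r$'' even though the conclusion only involves $s\in\{1,2,-1\}$; the paper's Lemma~\ref{onev} is correctly stated as a global inequality in $(x,y)$ with no smallness restriction, the smallness restriction entering only in the later conversion from $\nop{\cdot}^p$ to $\nop{\cdot}$.
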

  
  The heart of the proof of the theorem is the following lemma about a function of two real variables. 
\begin{lemma}\label{onev} Let $1<p<\infty$ and $f:\R^2\ra\R$ be given by
\eq{rv}
{
f(x,y):=|x+y|^p-|x|^p-yp|x|^{p-1}\sign(x).
}
Then the following assertions hold:
\begin{itemize}
\item[(a)]
  There is a constant  $\Lambda>2$ such that
\eqn
{
f(x,2y)\ge \Lambda f(x,y) \iq{for}x,y\in\R. 
}
\item[(b)] There is a constant $T>0$ such that 
\eqn
{
f(x,2y)\le T f(x,y) \iq{for}x,y\in\R. 
}
\item[(c)] There is a constant $K>0$ such that 
\eqn
{
f(x,y)\le K f(x,-y) \iq{for}x,y\in\R. 
}
\end{itemize} 
\end{lemma}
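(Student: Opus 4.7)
The plan is to reduce Lemma \ref{onev} to a one-variable compactness argument.

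First, a direct computation using $\sign(\lambda t)=\sign(\lambda)\sign(t)$ shows that $f$ is positively $p$-homogeneous:
\[
f(\lambda x,\lambda y)=|\lambda|^p f(x,y)\qquad\text{for all }\lambda\in\R,
\]
and by inspection $f(-x,-y)=f(x,y)$. Consequently, for $x\neq 0$ one has $f(x,y)=|x|^p\,g(y/x)$ when $x>0$ and $f(x,y)=|x|^p\,g(-y/x)$ when $x<0$, where
\[
g(u):=f(1,u)=|1+u|^p-1-pu.
\]
In either sign case the three ratios $f(x,2y)/f(x,y)$ (for (a) and (b)) and $f(x,y)/f(x,-y)$ (for (c)) become $g(2v)/g(v)$ and $g(v)/g(-v)$ respectively, for a suitable $v\in\R$. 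The degenerate case $x=0$ is immediate from $f(0,y)=|y|^p$, giving the bounds directly with $\Lambda=T=2^p$, $K=1$.

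Next I would establish the relevant properties of $g$. Strict convexity of $t\mapsto|t|^p$ for $p>1$ yields $|1+u|^p>1+pu$ for $u\neq 0$, so $g>0$ on $\R\setminus\{0\}$ with $g(0)=0$; a Taylor expansion gives $g(u)=\tfrac{p(p-1)}{2}u^2+o(u^2)$ near $0$; and direct expansion shows $g(u)\sim|u|^p$ as $|u|\to\infty$. For part (a), the key identity
\[
g(2u)-2g(u)=|1+2u|^p-2|1+u|^p+1
\]
is the second difference of the strictly convex function $t\mapsto|t|^p$ at the midpoint $1+u$, hence strictly positive whenever $u\neq 0$; thus the ratio $r(u):=g(2u)/g(u)$ satisfies $r(u)>2$ on $\R\setminus\{0\}$.

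A single compactness argument then yields all three parts. From the Taylor expansion and the asymptotic $g(u)\sim|u|^p$, the ratios $g(2u)/g(u)$ and $g(u)/g(-u)$ are continuous and strictly positive on $\R\setminus\{0\}$ and extend continuously to the two-point compactification $[-\infty,+\infty]$, with boundary values $4$ at $u=0$ and $2^p$ at $u=\pm\infty$ for the first ratio, and $1$ at both ends for the second. A continuous strictly positive function on a compact space attains positive extrema: the infimum $\Lambda$ of the first ratio then strictly exceeds $2$ (it is $>2$ at every interior $u$ as well as at the boundary values $4,2^p,2^p$, all $>2$), yielding (a); the supremum of the first ratio furnishes $T$ for (b); and the supremum of the second ratio furnishes $K$ for (c).

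The main obstacle is verifying that the two ratios really do have the asserted qualitative behaviour throughout $\R$, notably at $u=-1$ where $|1+u|^p$ loses smoothness for $1<p<2$. There $g(-1)=p-1>0$, and $g$ remains continuous and strictly positive there by strict convexity, so no difficulty arises. The compactness argument yields the constants non-quantitatively, which matches the qualitative statement of the lemma.
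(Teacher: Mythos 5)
Your proof is correct and follows the same overall framework as the paper's: reduce by homogeneity and the symmetry $f(-x,-y)=f(x,y)$ to the one-variable function $g(u)=|1+u|^p-1-pu$, compute the limits $g(2u)/g(u)\to 4$ at $u=0$ and $\to 2^p$ at $u=\pm\infty$, and conclude by a compactness-type argument. The genuine difference lies in how the crucial strict inequality $g(2u)-2g(u)>0$ for $u\neq 0$ is established: you observe directly that
\[
g(2u)-2g(u)=|1+2u|^p-2|1+u|^p+1
\]
is the (symmetric) second difference of the strictly convex function $t\mapsto|t|^p$ centered at $1+u$ with step $u$, hence strictly positive whenever $u\neq 0$. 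The paper instead argues by contradiction: it supposes the infimum of $g(2z)/g(z)$ were $\leq 2$, deduces that $g(2z)-2g(z)$ would have a nonzero root, and then rules this out by a sign analysis of the derivative $d(z)=2p\bigl(|1+2z|^{p-1}\sign(1+2z)-|1+z|^{p-1}\sign(1+z)\bigr)$, showing $d<0$ on $(-\infty,0)$ and $d>0$ on $(0,\infty)$. Your second-difference observation is cleaner and more conceptual — it bypasses the derivative computation and the case analysis of where $d$ can vanish entirely — and it makes the compactness conclusion for all three parts (a), (b), (c) fall out uniformly; the paper's route, by contrast, is more elementary-calculus in flavor but requires more bookkeeping. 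Both are correct.
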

\begin{proof}
We begin with the proof of (a), which is elementary, but perhaps not obvious. The claims (b) and (c) follow from  arguments used to establish (a), but  are much less subtle, and their proofs reduce to remarks. 

First note that since 
\eqn
{
f(0,2y)=2^p|y|^p=2^pf(0,y),
}
we may assume that $x\not=0,$ while $f(x,y)=f(-x,-y)$ shows that we may assume $x>0$ without loss of generality.  Dividing the inequality claimed in (a) by $x^p$ and putting $z=y/x,$  what we have to show is that there exists $\Lambda>2$ such that
\eq{claima}
{
 |1+2z|^p-1-p2z\ge\Lambda (|1+z|^p-1-pz)
}
for $z\in\R.$ 

Let 
\eq{dh}
{
g(z):=|1+z|^p-1-pz.
}
Note that $g(0)=0, g'(0)=0, g''(0)=p(p-1)\not=0.$ Thus 
\eq{zero}
{
\lim_{z\rightarrow 0}\frac{g(2z)}{g(z)}=4.
}
Clearly 
\eq{infty}
{
\lim_{z\rightarrow \pm\infty}\frac{g(2z)}{g(z)}=2^p.
}
Thus we have the desired inequality \reff{claima} for $z$ near 0 and near $\pm\infty,$ with any constant $\Lambda$ slightly less than $\min(4,2^p).$  Moreover,  it follows that if
\eq{claimc}
{
\inf_{z\not=0}\frac{g(2z)}{g(z)} \le 2,
}
then the inf is attained and there must be a point $z\in\R,$ $z\not=0,$ such that 
\eqns
{
g(2z)-2g(z)&=|1+2z|^p-1-2pz-2(|1+z|^p-1-pz)
\\&
= |1+2z|^p-2|1+z|^p+1=0. 
}
However, this function is positive for small $z\not=0$  and near $\pm\infty$ by the above, and its derivative is
\eqn
{
d(z):=2p(|1+2z|^{p-1}\sign(1+2z)-|1+z|^{p-1}\sign(1+z)),
}
which is continuous.  Clearly this $d(z)$ does not vanish unless 
\eqn
{
|1+2z|=|1+z|.
}
This last equation has only the solutions $z=0$ and $z=-2/3.$ Note that $d(-2/3)<0.$ Since $d$ can only change sign at a zero, and $d(z)$ is positive near $+\infty,$ it follows that $d(z)<0$ for $z<0$ and $d(z)>0$ for $z>0.$ Therefore the only zero of $g(2z)-2g(z)$ is $z=0,$ and (a) is proved. 

The assertions (b) and (c) yield to the first part of the  arguments above, as  one only needs to check them for $z$ near 0 and near infinity as 0 is the only zero of $g$. That is, the assertions amount to the statements that $g(2z)/g(z)$ and $g(z)/g(-z)$ are bounded. 
\end{proof}
\removelastskip
\proofof{Theorem \ref{lptp}.} We begin with the proof that $\nop{\cdot}$ is geometrically convex in the tangent plane.  To start, note that in this case that the $j$th component of $D\nop{x}^p$ is
\eqn
{
p|\xs j|^{p-1}\sign(\xs j),
}
while 
\eqn
{
D\nop x^p=p\nop{x}^{p-1}D\nop x. 
}
It follows that
\eq{nlp}
{
D\nop{x}=\frac1{\nop x^{p-1}}\Lp|\xs 1|^{p-1}\sign(\xs 1),\ldots,|\xs n|^{p-1}\sign(\xs n) \Rp.
}
Let $x,y\in\Rn,$ $x=(\xs1,\ldots,\xs n),$  etc. By Lemma \ref{onev} (a),  we have a $\Lambda>2$ such that 
\eqn
{
\sum_{i=1}^n (|\xs i+2\ys i|^p - |\xs i|^p-2\ys ip|\xs i|^{p-1}\sign(\xs i))\ge \Lambda \sum_{i=1}^n (|\xs i+\ys i|^p - |\xs i|^p-\ys ip|\xs i|^{p-1}\sign(\xs i)).
}
If we assume that 
\eq{tanp}
{
\ip{y,N(x)}=0;\i{equivalently,} \sum_{i=1}^n\ys i|\xs i|^{p-1}\sign(\xs i)=0,
}
i.e., $y$ is in the tangent plane, then we have
\eq{lpp}{
\nop{x+2y}^p-\nop x^p\ge \Lambda (\nop {x+y}^p-\nop x^p). 
}
On the other hand,  for each $\varepsilon>0$ there is an $r_\varepsilon>0$ such that
\eq{sest}{
p(1-\varepsilon)\le \frac {s^p-1}{s-1}\iq{and}\frac1p(1-\varepsilon)\le \frac {s-1}{s^p-1}
}
for
\eqn
{
|s-1|\le r_\varepsilon.
}
Suppose $\nop x=1.$ Then we use \reff{lpp} and \reff{sest} to find
\eqns
{
\nop{x+2y}-1&=(\nop{x+2y}^p-1)\frac{\nop{x+2y}-1}{\nop{x+2y}^p-1}
\\& \ge\Lambda(\nop{x+y}^p-1)\frac{\nop{x+2y}-1}{\nop{x+2y}^p-1}
\\&
=\Lambda (\nop{x+y}-1)\frac{\nop{x+y}^p-1 }{\nop{x+y}-1}\frac{\nop{x+2y}-1}{\nop{x+2y}^p-1}
\\&\ge\Lambda (\nop{x+y}-1)(1-\varepsilon)^2
}
for 
\eq{nxy}
{
1-r_\varepsilon\le\nop{x+y}, \nop{x+2y}\le 1+ r_\varepsilon.
}
It suffices that $\nop y\le r_\varepsilon/2;$ we may also choose $\varepsilon$ so that $\Lambda(1-\varepsilon)^2>2.$

The proofs that $\nop{\cdot}$ is balanced and doubling in the tangent plane run similarly.  For example, to show that it is doubling in the tangent plane, Lemma \ref{onev} (b) provides a constant $T$ such that 
\eqn
{
\sum_{i=1}^n (|\xs i+2\ys i|^p - |\xs i|^p-2\ys ip|\xs i|^{p-1}\sign(\xs i))\le T \sum_{i=1}^n (|\xs i+\ys i|^p - |\xs i|^p-\ys ip|\xs i|^{p-1}\sign(\xs i));
}
so if \reff{tanp} holds, we conclude that 
\eqn
{
\nop{x+2y}^p-\nop x^p\le T(\nop{x+y}^p-\nop x^p).
}
We may continue, as in the proof of geometric convexity in the tangent plane.  Checking the assertion that $\nop{\cdot}$ is balanced in the tangent plane is entirely similar, using Lemma \ref{onev} (c). 
\hfill $\square$

\section{Examples in the Case of the $p$-norm} \label{cexlp}
In this section we show that the H\"older exponent of Proposition \ref{holderp} is sharp for the $p$-norms.  We do this by producing $e, \be, m$ for which \reff{ren} holds and \reff{refa} is basically an equality, up to constants.

Let $n=3$ and $p>2.$ Put 
\eqs{php}
{
e:=(\delta, x,-x),\  \be:=(-\delta, x,-x),\ 
m:=(0,y,y)
}
where $\delta, x, y$ are positive numbers.  The conditions of \reff{ren} (with $L=1$) amount to
\eq{cref}{
1=\delta^p+2x^p,\ 1\le (x-y)^p+(x+y)^p.
}
where we presciently assume in the writing that $y<x,$ which is justified below. We are interested in small $\delta.$

We require information about the solutions of
\eq{solve}
{
1= (x-y)^p+(x+y)^p.
}
This may be rewritten as
\eqn
{
\frac 1{x^p}\le (1-r)^p+(1+r)^p, r=y/x. 
}
which we consider in the sharpened form
\eq{esolve}
{
2+\varepsilon=f(r)=(1-r)^p+(1+r)^p.
}
To match up with \reff{solve}, we would take, via \reff{cref},  
\eqn
{
\varepsilon =\frac 1{x^p}-2=\frac{2\delta^p}{1-\delta^p}.
}
Note that $\varepsilon>0$ is small if $\delta$ is small.  
Since we use it again later, we record some elementary facts about \reff{esolve} in a lemma.

\begin{lemma}\label{cexl} Let $1<p<\infty.$ If $0\le \varepsilon\le 2^p-2,$  then \reff{esolve} has a unique solution $r=g(\varepsilon)$ satisfying $0\le r\le 1.$ Moreover, $g$ is continuous, strictly increasing, differentiable on $(0,2^p-1),$ and satisfies
$g(0)=0, g(2^p-2)=1.$
\end{lemma}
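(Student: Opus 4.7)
The plan is to treat $f(r) = (1-r)^p + (1+r)^p$ as a one-variable function on $[0,1]$ and verify directly that it is a continuous strictly increasing bijection onto $[2, 2^p]$, then invert it to obtain $g$. All claimed properties of $g$ will then follow from properties of $f$ via elementary calculus.

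First I would compute $f(0) = 2$ and $f(1) = 2^p$, so $f$ sends $[0,1]$ into $[2, 2^p]$ with the correct endpoint values. Next I would compute
\[
f'(r) = p\bigl[(1+r)^{p-1} - (1-r)^{p-1}\bigr].
\]
For $r \in (0,1)$ and $p > 1$ we have $1+r > 1-r > 0$ and $p-1 > 0$, so $(1+r)^{p-1} > (1-r)^{p-1}$, giving $f'(r) > 0$ on $(0,1)$. Hence $f$ is strictly increasing on $[0,1]$, and by continuity plus the intermediate value theorem, for each $\varepsilon \in [0, 2^p-2]$ there is exactly one $r \in [0,1]$ with $f(r) = 2 + \varepsilon$. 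Define $g(\varepsilon)$ to be that $r$.

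Continuity and strict monotonicity of $g$ are then automatic from the corresponding properties of $f$ (the inverse of a continuous strictly increasing function on a compact interval is continuous and strictly increasing). Differentiability of $g$ on the open interval $(0, 2^p-2)$ follows from the inverse function theorem, applied at points $r \in (0,1)$ where $f'(r) > 0$. The endpoint values $g(0) = 0$ and $g(2^p-2) = 1$ are just the inversions of $f(0) = 2$ and $f(1) = 2^p$.

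There is essentially no obstacle here; the only subtlety worth flagging is that $f'(0) = 0$, which is why the lemma only asserts differentiability on the open interval rather than on the closed one. Everything else is a standard application of monotonicity and the inverse function theorem.
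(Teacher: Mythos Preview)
Your proof is correct and follows essentially the same approach as the paper: compute $f(0)=2$, $f(1)=2^p$, show $f'(r)>0$ on $(0,1)$, and invert. If anything, your version is slightly cleaner---you correctly write $f(0)=2$ where the paper has the typo $f(0)=0$, and you note explicitly why differentiability is only claimed on the open interval.
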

\begin{proof}
The existence of a solution is guaranteed by the intermediate value theorem and $f(0)=0,$ $f(1)=2^p.$   Next, $f'(r)=p(-(1-r)^{p-1}+(1+r)^{p-1})>0$ for $0<r\le 1,$ so $f$  is strictly increasing. Solutions are therefore unique, and $g$ is well defined, strictly increasing and differentiable on the open interval by the implicit function theorem. 
\end{proof}

It follows that \reff{solve} has a unique solution $y\ge 0$ if $\delta$ is small, and $y/x$ is then small. 
So long as $y$ is small compared to $x,$ as it will be by the preceding remarks, we then have, by Taylor approximation, 
\eq{large}
{
(x-y)^p+(x+y)^p\approx 2x^p+p(p-1)x^{p-2}y^2. 
}
Thus the second condition of \reff{cref} is satisfied with  $y$ such that
\eq{y}
{
y^2\approx \frac{1-2x^p}{p(p-1)x^{p-2}}=\frac{\delta^p}{p(p-1)x^{p-2}}.
}
Since
\eq{cex}
{
\nop{e-\be}= 2\delta\iq{while}\nop{m}=2y\approx C \delta^{p/2},
}
Thus $\no{e-\be}$ and $\no m^{2/p}$ are comparable, verifying the sharpness of the exponent in \reff{holder} in this case.  
 
 To continue, we treat $1<p<2.$ In this case, we set 
 \eqn
 {
 e:=(x-\delta, x+\delta, 0), \ \be:=(x+\delta, x-\delta, 0), \ m:=(0,0,y). 
 }
 The conditions \reff{ren} (with $L=1$) become 
 \eq{conds}
 {
 1=(x-\delta)^p+(x+\delta)^p, \ 1\le 2x^p+y^p.
  }
Rewriting the first relation as 
\eqn
{
\frac 1{x^p}=(1-r)^p+(1+r)^p,\ r=\delta/x,
}
Lemma \ref{cexl} yields $\delta=xg(1/x^p-2)$ so long as
\eqn
{
0\le \frac 1{x^p}-2\le 2^p-2,
}
or 
\eqn
{
\frac1{2^p}\le x^p\le \frac 1{2}.
}
Moreover, $\delta\ra 0$ as $x^p$ increases to $1/2;$ in particular, $\delta$ is small compared to $x.$ Then the first relation of \reff{conds} tells us that
\eqn
{
1\approx 2x^p+p(p-1)x^{p-2}\delta^2
}
or
\eqn
{
\delta^2\approx \frac{1-2x^p}{p(p-1)x^{p-2}}.
}
Solving the second relation of \reff{conds} as an equality, we have
\eqn
{
y^p=1-2x^p.
}
Thus, as $x^p$ increases to $1/2$, we have that $y^p$ and $\delta^2$ are comparable, so 
\eqn
{
\nop{e-\be}=22^{1/p}\delta, \nop{m}=y
}
implies that $\nop{e-\be}$ is comparable to $\nop m^{p/2},$ verifying the sharpness of the exponent in this case. 

\renewcommand{\sc}{}

\small
\noindent Acknowledgements: The authors were partially supported by  NSF Grants DMS-0654267 and DMS-0400674, respectively.\\ The authors thank Carl de Boor for numerous helpful comments on this manuscript. \\
The second author thanks the Department of Mathematics, University of Texas, for its hospitality on several visits during this work. 
 

\begin{thebibliography}{99}
                                                                                                                                                                                                       
\bibitem{alber} {\sc  Ya. Alber,}
A bound for the modulus of continuity for metric projections in a uniformly convex and uniformly smooth Banach space. J. Approx. Theory 85 (1996), no. 3, 237--249.     

\bibitem{bjorn} {\sc B. O. Bj\"ornestal,} { Local Lipschitz continuity of the metric projection operator,}  in
``Approximation Theory,'' Vol. 4, pp. 43 - 53, Stefan Banach Internat. Math. Center Publication,
Warsaw, 1979.

\bibitem{cc}{\sc L. Caffarelli and M. G. Crandall,}  Distance functions and almost global solutions of eikonal equations, preprint

\bibitem{lwy}{\sc Chong Li, Xinghua Wang, and Wenshan Yang,}
{An estimate for Lipschitz constants of metric projections}, J. Math. Anal. Appl. 231 (1999), no. 1, 133--141

\bibitem{jac}{\sc J. A. Clarkson}, {\it Uniformly Convex Spaces}, Trans. Amer. Math. Soc. 40 (1936), 396Ð414.

\bibitem{hanner}{\sc O. Hanner,} {\it
On the uniform convexity of $L^p$ and $l^p,$} 
Ark.Mat. 3 (1956), 239Ð244.

\bibitem{lin}{\sc J. Lindenstrauss}, {\it On the modulus of smoothness and divergent series in Banach spaces,}  Michigan Math. J.  10  (1963) 241--252.

\end{thebibliography}
\end{document}